\documentclass{article}
\usepackage{float}
\usepackage{cite}
\usepackage{shapepar}
\usepackage{listings}
\usepackage{amsmath}
\usepackage{amsthm}
\usepackage{mathtools}
\usepackage{graphics}
\usepackage{amssymb}
\usepackage{makecell}
\usepackage{mathrsfs}
\usepackage{framed}
\usepackage{diagbox}
\usepackage{booktabs}
\usepackage{fancybox}
\usepackage{authblk}
\usepackage{geometry}
\usepackage{multirow}
\usepackage{enumerate}
\usepackage{subcaption}
\usepackage{caption}
\usepackage{hyperref}
\usepackage{boldline}
\usepackage[table]{xcolor}
\usepackage{slashbox}
\usepackage{tabularray}
\usepackage{vcell}
\usepackage{titlesec}
\usepackage{graphicx} 
\usepackage{array}
\numberwithin{equation}{section}
\newtheorem{theorem}{Theorem}[section]
\usepackage[ruled,linesnumbered]{algorithm2e}
\usepackage{enumitem}
\newtheorem{lemma}{Lemma}[section]
\newtheorem{remark}{Remark}[section]

\newcommand\keywords[1]{\textbf{Keywords}: #1}

\newtheorem{condition}{Condition}[section]

\title{Constrained Neural Parameterization for Optimization\\ in Function Spaces}

\author[$\dagger \ddagger$]{Michael Hinterm\"uller\textsuperscript{1}}
\author[$\dagger$]{Jianfeng Ning\textsuperscript{2}}

\affil[$\dagger$]{\small Weierstrass Institute for Applied Analysis and Stochastics, Berlin, Germany}
\affil[$\ddagger$]{\small Institute for Mathematics, Humboldt-Universit\"at zu Berlin, Berlin, Germany}

\date{}

\begin{document}

\maketitle
\footnotetext[1]{Email: \texttt{hintermueller@wias-berlin.de}}
\footnotetext[2]{Email: \texttt{ning@wias-berlin.de}}
\setcounter{footnote}{2}

\begin{abstract}
%Constrained optimization in function spaces arises in a wide range of applications.   In this work, we 
We propose constrained neural parameterization schemes for several classes of constraints arising in optimization problems in function spaces.
This is achieved by constructing smooth neural parameterizations whose image lies entirely in the admissible set while remaining asymptotically dense. In this way, the original constrained optimization problem is transformed into a smooth unconstrained problem in parameter space, enabling efficient gradient-based optimization without penalty parameters or Lagrange multipliers. We develop geometric constructions for polyhedral constraint sets in Hilbert spaces, propose smooth neural architectures for some pointwise constraints, and introduce an exact reduced neural formulation for PDE constraints that admit a separable structure. 
Numerical experiments demonstrate the effectiveness of the proposed methods.
\end{abstract}
\keywords{constrained neural parameterization, PDE-constrained optimization, polyhedral constraints, state constraints, PINNs}

\section{Introduction}

Constrained optimization in function spaces plays a central role in many areas of applied mathematics, including optimal control, inverse problems, and shape optimization. In these applications one often seeks to minimize a functional over an admissible set defined by structural conditions such as pointwise inequalities, integral constraints, or partial differential equations (PDEs). Such constraints are not merely technical requirements but typically reflect intrinsic modeling principles: pointwise bounds encode physical limitations, integral constraints represent conservation laws or resource restrictions, complementarity conditions arise from equilibrium phenomena, and PDE constraints describe governing physical laws. Designing numerical methods that are both computationally efficient and capable of reliably enforcing such constraints therefore is of paramount importance.

Consider the constrained optimization problem in a Banach space $X$ equipped with the norm $\|\cdot\|_X$:
\begin{equation}\label{eq:gen_opt}
\min_{x\in X} J(x) 
\quad \text{subject to} \quad 
x\in\mathcal K\subset X,
\end{equation}
where $J:X\to\mathbb{R}$ is the objective functional and $\mathcal K$ denotes an admissible set. In this work we propose a class of \emph{constrained neural parameterization} (CNP) methods that enforce feasibility by construction for some common constraints. The central idea, advocated here, is to introduce a smooth parametric mapping
\begin{equation}
    \mathcal T_M:\mathbb R^M\to X,
\end{equation}
whose image is contained entirely in $\mathcal K$. The mapping is designed so that its range becomes dense in $\mathcal K$ as the parameter dimension increases. This means in particular that $\forall\,x\in \mathcal{K}$, there exists a sequence $\{\theta_M\}$ such that
\begin{equation}
 \|\mathcal{T}_M(\theta_M)-x\|_X\rightarrow0,\quad\text{as }M\rightarrow \infty.
\end{equation}
Consequently, the constrained problem \eqref{eq:gen_opt} can be approximated by the unconstrained finite-dimensional optimization problem for sufficiently large $M$:
\begin{equation}\label{eq:unconstrained}
\min_{\theta\in\mathbb R^M} J(\mathcal T_M(\theta)),
\end{equation}
whose candidate solutions automatically satisfy the constraints. Gradient-based optimization can therefore be performed directly in parameter space without the need for penalty parameters, projections, or multiplier updates. Thus, feasibility is enforced by construction rather than by external constraint-handling mechanisms.

The mathematical foundations of constrained optimization are rooted in Lagrangian duality, variational analysis, and Karush--Kuhn--Tucker (KKT) theory. Classical references such as \cite{bazaraa2006nonlinear,rockafellar1998variational} provide comprehensive treatments of optimality conditions, constraint qualifications, and duality frameworks. In convex settings, interior-point and barrier methods \cite{wright1992interior,forsgren2002interior} offer algorithms with strong theoretical guarantees and approximate the original constrained problem by a sequence of unconstrained problems. Their numerical realization, however, requires careful tuning of barrier parameters and related algorithmic choices. For nonlinear problems, sequential quadratic programming, augmented Lagrangian methods, and exact penalty formulations \cite{boggs1995sequential,birgin2014practical,di1989exact} are widely used and exhibit superlinear local convergence under suitable regularity assumptions. When these methods are extended to infinite-dimensional settings, additional analytical and numerical challenges arise. In particular, classical numerical approaches often enforce constraints through penalty terms, projection operators, or active-set strategies \cite{malitsky2015projected,hintermuller2002primal}. Penalty-based formulations require careful tuning of weights and may suffer from ill-conditioning as penalty parameters increase \cite{bertsekas2014constrained}.

PDE-constrained optimization has been systematically studied over the past decades; see, for instance, \cite{hinze2008optimization,troltzsch2010optimal,hintermuller2010pde}. Classical solution strategies are typically based on reduced formulations through the control-to-state mapping, adjoint equations for gradient computation, and semismooth Newton or active-set techniques for handling constraints. These approaches form the backbone of efficient gradient-based algorithms in both theory and practice. In practical applications, PDE-constrained optimization problems often involve some control or state constraints. Control constraints, such as box restrictions on distributed controls, are relatively well understood and can often be treated using projection formulas or primal-dual active-set strategies within the KKT framework. In contrast, state constraints introduce substantially greater analytical and numerical difficulties. For elliptic and parabolic problems with pointwise state constraints, the associated Lagrange multipliers may belong to spaces of Radon measures or exhibit low Sobolev regularity rather than lie in $L^p(\Omega)$ \cite{hintermuller2006feasible,troltzsch2010optimal}. Furthermore, classical constraint qualifications may fail in infinite-dimensional settings, and strict complementarity may break down, especially in problems involving complementarity-type structures or biactive sets of positive measure \cite{hintermuller2009mathematical}. These difficulties often lead to instability in multiplier-based algorithms and require sophisticated regularization, interior-point, or active-set strategies.

In recent years, neural network methods have emerged as flexible, mesh-free approaches for solving PDEs and related inverse problems \cite{raissi2019physics,yu2018deep,nganyu2023deep,lu2021learning}. Several neural frameworks have also been developed for PDE-constrained optimal control problems. One class of approaches incorporates the PDE residual as a penalty term in the objective function, as in physics-informed neural networks (PINNs) \cite{lu2021physics,mowlavi2023optimal}. Another class of methods relies on neural approximations of the KKT system. For example, the adjoint-oriented neural network (AONN) proposed in \cite{yin2024aonn} alternately updates the state, adjoint, and control variables. The AOMAD method \cite{yong2026adjoint} combines adjoint techniques with meta-learning strategies to efficiently handle parametric optimal control problems. The OSNN method \cite{dai2025solving} approximates the state and adjoint variables in elliptic optimal control problems using separate networks and solves the resulting reduced optimality system using PINN techniques. In \cite{song2024admm}, the alternating direction method of multipliers was integrated with PINNs to address nonsmooth PDE-constrained optimization problems. Despite their flexibility, enforcing constraints robustly within neural frameworks remains challenging. Existing KKT-based neural approaches mainly address control constraints and become difficult to apply to state-constrained problems when the associated multipliers exhibit low regularity. Moreover, many existing neural approaches enforce the PDE constraint through residual minimization, so that the resulting numerical state and control generally do not satisfy the PDE relation exactly. Another limitation of PINN-based neural approaches for PDE-constrained optimization is that they require sufficient regularity assumptions on the state, which may not be satisfied in many situations, such as irregular domains. We also mention some operator learning methods developed for PDE-constrained optimization where the learned control-to-state mapping is employed to search for optimal control; see \cite{hwang2022solving,dong2022optimization}.

Beyond PDE solvers, considerable effort has been devoted to designing neural network architectures that preserve structural properties of the target functions. Examples include divergence-free constructions for incompressible flows via stream functions \cite{raissi2019physics}, hard enforcement of Dirichlet and periodic boundary conditions \cite{lu2021physics}, invertible neural networks \cite{behrmann2019invertible}, and input-convex neural networks for convexity preservation \cite{amos2017input}. These developments demonstrate that embedding constraints directly into the architecture, rather than enforcing them through penalties, can significantly improve numerical robustness and interpretability. While existing structure-preserving architectures typically enforce specific functional properties, much less attention has been devoted to the geometric structure of admissible sets arising in constrained optimization. A particularly important example is the class of polyhedral constraint sets defined by finitely many linear inequalities. Such sets play a fundamental role in convex optimization. However, constructive parameterizations of polyhedral sets suitable for numerical computation in infinite-dimensional settings remain largely unexplored. 

The preceding discussion reveals three limitations of current neural approaches to constrained optimization. First, penalty-based methods enforce feasibility only approximately and require delicate tuning of loss weights. Second, KKT-based neural solvers introduce additional networks for adjoints and multipliers, and their training may become unstable in the presence of low-regularity multipliers or degenerate complementarity conditions. Third, to the best of our knowledge, existing approaches do not provide a unified geometric framework for encoding general convex polyhedral constraints, nor a hard-constraint formulation for exact satisfaction of PDE constraints.

The main contributions of this paper are summarized as follows:
\begin{enumerate}
\item We develop a Hilbert-space decomposition framework for polyhedral constraint sets and derive constructive smooth CNP schemes that enforce feasibility exactly, including extensions to Dirichlet boundary conditions. We also propose CNPs for several classes of pointwise constraints, with and without boundary conditions.

\item For PDE-constrained optimization problems whose PDE constraints admit an explicit reduced form, we introduce an exact reduced CNP that enforces the PDE constraint by construction, reduces the number of neural networks, and avoids multiplier-related difficulties. For Poisson equations in polygonal domains, we further propose a singularity-enriched CNP to capture low-regularity optimal states.

\item We validate the proposed methods on state-constrained, low-regularity, complementarity-type PDE control problems and inverse problems, demonstrating improved accuracy and feasibility preservation.
\end{enumerate}

The remainder of this paper is organized as follows. 
Section~\ref{sec:polyhedral} presents the theoretical foundations for polyhedra in Hilbert spaces and develops smooth CNP constructions, including extensions to indirectly defined constraint functionals and Dirichlet boundary conditions. 
Section~\ref{sec:pointwise} introduces CNP schemes for several pointwise constraints. 
Section~\ref{sec:PDE} focuses on PDE-constrained optimization, presenting the exact reduced neural method and its singularity-enriched modification. 
Section~\ref{sec:numerical} reports numerical experiments on several challenging examples. 
Finally, Section~\ref{sec:conclusion} concludes the paper and discusses directions for future research.

\section{Polyhedral constraints}
\label{sec:polyhedral}

In this section, we briefly recall the definition of polyhedra in finite-dimensional spaces and the classical Minkowski--Weyl theorem. We then extend the discussion to infinite-dimensional settings and present some decomposition formulas for polyhedra in Hilbert spaces. This result serves as the theoretical foundation for the constrained neural parameterization of polyhedral sets developed in this section.

\subsection{Decomposition of polyhedral sets}

Let $d\in \mathbb{N}$ be a positive integer. A polyhedron in $\mathbb{R}^d$ is a convex set defined as the intersection of finitely many half-spaces \cite{boyd2004convex}:
\begin{equation}
    \mathcal{P}=\{x\in \mathbb{R}^d \mid a_i^\top x\le b_i,\ i=1,\dots,n\}.
\end{equation}
The convex hull of a set $U\subset \mathbb{R}^d$ is defined by \cite{boyd2004convex}
\begin{equation}
    \operatorname{conv}(U)
    :=
    \left\{
    \sum_{i=1}^m \lambda_i x_i
    \,\middle|\,
    x_i\in U,\ \lambda_i\ge 0,\ i=1,\dots,m,\ \sum_{i=1}^m \lambda_i=1
    \right\},
\end{equation}
that is, the set of all finite convex combinations of points in $U$. The conic hull of $U$ is defined as \cite{boyd2004convex}
\begin{equation}
    \operatorname{cone}(U)
    :=
    \left\{
    \sum_{i=1}^m \lambda_i x_i
    \,\middle|\,
    x_i\in U,\ \lambda_i\ge 0,\ i=1,\dots,m
    \right\}.
\end{equation}

We next recall the classical Minkowski--Weyl theorem \cite{boyd2004convex}.

\begin{theorem}[Minkowski--Weyl theorem]\label{theorem:MW_theorem}
Let $\mathcal{P}\subseteq \mathbb{R}^d$ be a polyhedron. Then there exist finitely many points $\{y_i\}_{i=1}^p\subset \mathbb{R}^d$ and finitely many vectors $\{z_i\}_{i=1}^q\subset \mathbb{R}^d$ such that
\begin{equation}
    \mathcal{P}
    =
    \operatorname{conv}(\{y_i\}_{i=1}^p)
    +
    \operatorname{cone}(\{z_i\}_{i=1}^q).
    \label{equa:MW_theorem}
\end{equation}
Conversely, any set of the form \eqref{equa:MW_theorem} is a polyhedron.
\end{theorem}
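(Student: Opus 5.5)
The plan is the standard homogenization argument, which reduces both directions to the finite-dimensional Minkowski--Weyl theorem for cones: a cone is finitely generated if and only if it is polyhedral, i.e.\ of the form $\{w\mid Cw\le 0\}$. I would first prove this cone version and then lift it to polyhedra.

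\emph{Polyhedral cone $\Rightarrow$ finitely generated (Minkowski direction).} Let $C=\{w\in\mathbb R^{d}\mid c_j^\top w\le 0,\ j=1,\dots,n\}$. Split off the lineality space $L=\{w\mid c_j^\top w=0\ \forall j\}$; it is spanned by $\pm$ a basis, which are finitely many generators. On the pointed part $C\cap L^\perp$ I would show that every nonzero element is a nonnegative combination of extreme rays. Take $w$ in this pointed cone. If the active constraints at $w$ have rank $d'-1$ (with $d'=\dim L^\perp$), then $w$ spans an extreme ray. Otherwise there is a direction $v\ne0$ keeping all active constraints tight. Move along $\pm v$ until a new constraint becomes active; pointedness guarantees this happens in at least one direction, and if it happens in only one direction the other direction is a recession direction of the same face, which one also handles. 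This writes $w$ as a combination of elements with strictly more active constraints, and induction on the rank finishes the step. Extreme rays are determined by subsets of rows of rank $d'-1$, so there are finitely many, which gives finite generation.

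\emph{Finitely generated $\Rightarrow$ polyhedral (Weyl direction).} For $K=\operatorname{cone}(\{g_k\})$, consider the polar $K^\circ=\{c\mid c^\top g_k\le 0\ \forall k\}$. This is a polyhedral cone, so by the previous step $K^\circ=\operatorname{cone}(\{c_j\})$. The bipolar theorem then gives $K=K^{\circ\circ}=\{w\mid c_j^\top w\le 0\ \forall j\}$. To use the bipolar theorem I need $K$ to be closed. Closedness of a finitely generated cone follows from Carathéodory's theorem, which writes $K$ as a finite union of cones generated by linearly independent subsets, each of which is closed. Separation (the Farkas lemma) supplies $K^{\circ\circ}\subseteq K$.

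\emph{Lifting to polyhedra.} Given $\mathcal P=\{x\mid a_i^\top x\le b_i\}$, form the cone
\begin{equation*}
C=\{(x,t)\in\mathbb R^{d+1}\mid a_i^\top x-b_it\le 0,\ t\ge 0\}.
\end{equation*}
Then $x\in\mathcal P$ if and only if $(x,1)\in C$. By the cone version, $C$ is generated by finitely many vectors $(u_k,t_k)$ with $t_k\ge0$. After rescaling, the generators with $t_k>0$ can be taken with $t_k=1$, giving points $y_i=u_k$; those with $t_k=0$ give the directions $z_i$. Writing $(x,1)$ as a nonnegative combination of the generators, the coefficients on the $t_k=1$ generators must sum to $1$, which yields \eqref{equa:MW_theorem}. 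One edge case needs care: if $\mathcal P=\emptyset$, take $p=0$ with the convention that the sum is empty. Conversely, given $\operatorname{conv}\{y_i\}+\operatorname{cone}\{z_i\}$, the set is exactly the slice $t=1$ of $\operatorname{cone}\{(y_i,1),(z_i,0)\}$. By the Weyl direction this cone is polyhedral, and setting $t=1$ produces linear inequalities in $x$.

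The main obstacle I expect is the Minkowski step for cones, namely the finiteness of the extreme-ray decomposition, especially the handling of the lineality space. An alternative route is Fourier--Motzkin elimination. Its key fact is that the projection of a polyhedron is a polyhedron. The cone $\operatorname{cone}\{g_k\}$ is the projection of $\{(w,\lambda)\mid w=G\lambda,\ \lambda\ge0\}$, so that route proves the Weyl direction directly. Minkowski would then follow by polarity, if the extreme-ray argument becomes messy.
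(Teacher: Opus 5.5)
The paper gives no proof of this theorem. It quotes the result from Boyd--Vandenberghe and uses it in Theorem~\ref{Theo:decomp_hilbert} and Lemma~\ref{lemma:decom_general}. Your homogenization argument is therefore a self-contained replacement, not a rival route. Its architecture is the standard one and is sound: the splitting $C=L+(C\cap L^\perp)$, Weyl for cones by polarity with closedness from conic Carath\'eodory, the lift $x\in\mathcal P\iff(x,1)\in C$ with generators rescaled to $t_k\in\{0,1\}$, the converse via the slice $t=1$, and the empty case all go through.

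The one step that does not close as written is the one-sided case of your Minkowski induction on the pointed cone $C'=C\cap L^\perp$. If the move from $w$ stops only along $+v$, you get $w=(w+s_+v)+s_+(-v)$ with $-v\in C'$. But $-v$ need not have more active constraints than $w$. In the orthant of $\mathbb R^3$, $w=(1,1,1)$ and $v=-(1,2,1)$ give $w=(\tfrac12,0,\tfrac12)+\tfrac12(1,2,1)$, and $(1,2,1)$ has no active constraint, just like $w$. So the induction on active rank does not reach the second piece. Separately, $v$ must be non-parallel to $w$, or the move is circular. Since your Weyl step is derived from this one by polarity, both directions depend on it.

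The repair is to choose $v$ so that neither $v$ nor $-v$ lies in $C'$:
\begin{itemize}
\item By pointedness, $\ell(u):=-\sum_j c_j^\top u$ is strictly positive on $C'\setminus\{0\}$.
\item When the active rank is below $d'-1$, the null space $N$ of the active constraints (within $L^\perp$) has dimension at least $2$, so there is $0\neq v\in N\cap\ker\ell$.
\item Both moves then stop at finite $s_\pm>0$. Otherwise $c_j^\top(\pm v)\le0$ for all $j$, so $\pm v\in C'$, contradicting $\ell(\pm v)=0$.
\item The endpoints are nonzero, since $\ell(w\pm s_\pm v)=\ell(w)>0$.
\item Each endpoint acquires a constraint $c_k$ with $c_k^\top v\neq0$, which therefore lies outside the span of the active ones.
\end{itemize}
Hence $w$ is a positive combination of two vectors of strictly larger active rank. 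Equivalently, you can work on the compact base $\{u\in C':\ell(u)=1\}$ and show that a polytope is the convex hull of its vertices.

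Your Fourier--Motzkin fallback (Weyl by projection, Minkowski by polarity and the bipolar theorem) is complete as it stands. The two routes buy different things, though. The repaired extreme-ray route yields the refined form the paper actually uses when it computes $y_i,z_i$ from active sets of size $r$ and $r-1$. For a pointed $\mathcal P$ the homogenized cone is pointed, so no lineality generators appear. Its extreme rays with $t>0$ are exactly $(y,1)$ with $y$ a vertex of $\mathcal P$, and those with $t=0$ are $(z,0)$ with $z$ an extreme ray of the recession cone. The projection-plus-polarity route only produces some finite generating set. That suffices for the theorem and for the CNP construction, but not for identifying the generators with vertices and extreme rays.
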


Let now $X$ be a Banach space with dual space $X^*$. A subset $\mathcal{P}\subset X$ is called a polyhedron if there exist $\alpha_1,\dots,\alpha_n\in X^*$ and $b_1,\dots,b_n\in \mathbb{R}$ such that
\begin{equation}
    \mathcal{P}
    :=
    \{x\in X \mid \langle \alpha_i,x\rangle_{X^*,X}\le b_i,\ i=1,\dots,n\}.
    \label{equa:polyhedron_inf}
\end{equation}
The following decomposition theorem for polyhedra in Banach spaces was established in \cite{zheng2009pareto,luan2020representation}.

\begin{theorem}
Let $X$ be a Banach space and let $\mathcal{P}$ be a polyhedron of the form \eqref{equa:polyhedron_inf}. Then there exist closed subspaces $X_1,X_2\subset X$ and a polyhedron $\mathcal{P}_2\subset X_2$ such that
\begin{equation}
    \mathcal{P}=X_1+\mathcal{P}_2,
\end{equation}
where
\begin{equation}
    X_1=\{x\in X \mid \langle \alpha_i,x\rangle_{X^*,X}=0,\ i=1,\dots,n\},
\end{equation}
\begin{equation}
    X=X_1+X_2,\qquad X_1\cap X_2=\{0\},\qquad \dim(X_2)<\infty.
    \label{equa:x1x2}
\end{equation}
Conversely, if $X_1,X_2$ satisfy \eqref{equa:x1x2} and $\mathcal{P}_2\subset X_2$ is a polyhedron, then $\mathcal{P}=X_1+\mathcal{P}_2$ is a polyhedron in $X$.
\end{theorem}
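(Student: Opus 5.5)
The plan is to build the decomposition from the finite-codimensional kernel of the constraint functionals. Set $X_1=\bigcap_{i=1}^n\ker\alpha_i$. It is closed, as a finite intersection of kernels of continuous functionals. Define the linear map
\[
A:X\to\mathbb{R}^n,\qquad Ax=(\langle\alpha_1,x\rangle_{X^*,X},\dots,\langle\alpha_n,x\rangle_{X^*,X}),
\]
so that $X_1=\ker A$. The range $A(X)$ is a subspace of $\mathbb{R}^n$ of some dimension $k\le n$. Choose $e_1,\dots,e_k\in X$ such that $Ae_1,\dots,Ae_k$ form a basis of $A(X)$, and put $X_2=\operatorname{span}\{e_1,\dots,e_k\}$. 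This space is finite-dimensional and therefore closed.

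\textbf{Complementarity.} We check that $X_1$ and $X_2$ satisfy \eqref{equa:x1x2}.
\begin{itemize}
\item If $x\in X_1\cap X_2$, write $x=\sum c_je_j$. Then $0=Ax=\sum c_jAe_j$, and linear independence forces $c_j=0$. Hence $X_1\cap X_2=\{0\}$.
\item For any $x\in X$, write $Ax=\sum c_jAe_j$ and set $x_2=\sum c_je_j\in X_2$. Then $A(x-x_2)=0$, so $x-x_2\in X_1$. Hence $X=X_1+X_2$.
\end{itemize}

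\textbf{The decomposition $\mathcal{P}=X_1+\mathcal{P}_2$.} Define $\mathcal{P}_2=\mathcal{P}\cap X_2=\{x\in X_2\mid \langle\alpha_i|_{X_2},x\rangle\le b_i,\ i=1,\dots,n\}$. The restrictions $\alpha_i|_{X_2}$ are continuous functionals on $X_2$, so $\mathcal{P}_2$ is a polyhedron in $X_2$.
\begin{itemize}
\item Let $x\in\mathcal{P}$ and split $x=x_1+x_2$ with $x_1\in X_1$ and $x_2\in X_2$. Since $\alpha_i(x_1)=0$, we get $\alpha_i(x_2)=\alpha_i(x)\le b_i$ for every $i$. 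So $x_2\in\mathcal{P}_2$, which gives $\mathcal{P}\subset X_1+\mathcal{P}_2$.
\item Conversely, if $x_1\in X_1$ and $x_2\in\mathcal{P}_2$, then $\alpha_i(x_1+x_2)=\alpha_i(x_2)\le b_i$. So $X_1+\mathcal{P}_2\subset\mathcal{P}$.
\end{itemize}

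\textbf{Converse statement.} Suppose $X=X_1\oplus X_2$ with $X_1,X_2$ closed and $\dim X_2<\infty$, and let $\mathcal{P}_2=\{y\in X_2\mid\beta_j(y)\le c_j,\ j=1,\dots,m\}$ with $\beta_j\in X_2^*$. Let $\pi_2:X\to X_2$ be the projection along $X_1$. This is the step I expect to be the main obstacle: showing that $\pi_2$ is continuous. I would try three routes.
\begin{itemize}
\item Open mapping theorem: the map $X_1\times X_2\to X$, $(x_1,x_2)\mapsto x_1+x_2$, is a continuous linear bijection between Banach spaces, so its inverse is bounded and $\pi_2$ is continuous.
\item Direct argument: $X/X_1$ is finite-dimensional, so the quotient map composed with the isomorphism $X/X_1\cong X_2$ is continuous.
\item Hahn--Banach: extend coordinate functionals on $X_2$ and correct them on $X_1$.
\end{itemize}
Once $\pi_2$ is continuous, set $\alpha_j=\beta_j\circ\pi_2\in X^*$. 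Then
\[
X_1+\mathcal{P}_2=\{x\in X\mid \langle\alpha_j,x\rangle_{X^*,X}\le c_j,\ j=1,\dots,m\},
\]
because $x=x_1+x_2$ belongs to $X_1+\mathcal{P}_2$ exactly when $\pi_2x=x_2\in\mathcal{P}_2$. This exhibits $X_1+\mathcal{P}_2$ as a polyhedron of the form \eqref{equa:polyhedron_inf}.
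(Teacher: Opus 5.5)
Your proof is correct and complete. The paper gives no proof of this statement: it quotes it from \cite{zheng2009pareto,luan2020representation} and proves only the Hilbert-space version, Theorem~\ref{Theo:decomp_hilbert}. Your argument is the natural Banach-space version of that proof.

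In the Hilbert case the complement of $X_1=V_\alpha^\perp$ is the canonical one, $V_\alpha=\operatorname{Span}\{\alpha_1,\dots,\alpha_n\}$. This is one admissible choice of your $X_2$, since $A$ maps it injectively onto $A(X)$. The finite-dimensional piece there is $\mathcal{P}\cap V_\alpha$, exactly like your $\mathcal{P}_2=\mathcal{P}\cap X_2$. The converse there silently uses the orthogonal projection onto $V$, which is automatically continuous. Without an inner product you must choose a non-canonical complement and justify continuity of the projection $\pi_2$ along $X_1$. That is precisely where your open-mapping or quotient argument is needed, and both are complete as you state them.

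A few refinements:
\begin{itemize}
\item The quotient route is the cleaner of the two. It also shows that completeness of $X$ is never used, so the result holds in any normed space.
\item Your Hahn--Banach route is circular as worded. Correcting an extension on $X_1$ amounts to composing with $I-\pi_2$, whose continuity is the very point at issue. If you want this route, separate $e_j$ from the closed subspace $X_1+\operatorname{span}\{e_i: i\neq j\}$ instead.
\item You were right to keep the hypothesis that $X_1$ is closed in the converse; condition \eqref{equa:x1x2} alone does not suffice. On an infinite-dimensional $X$, take a discontinuous linear functional $f$, set $X_1=\ker f$ and $X_2=\operatorname{span}\{e\}$ with $f(e)=1$. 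Then $X_1+\{te: t\le 0\}=\{x: f(x)\le 0\}$ is dense and proper, hence not closed, hence not a polyhedron.
\end{itemize}
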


We now specialize to the Hilbert space setting. Using the Riesz representation theorem, we identify $X^*$ with $X$. In this case, the structure of polyhedra admits a more explicit representation.

\begin{theorem}\label{Theo:decomp_hilbert}
Let $X$ be a Hilbert space and let $\mathcal{P}\subset X$ be the polyhedron
\begin{equation}\label{equa:polyhedron_hilb}
    \mathcal{P}
    :=
    \{x\in X \mid \langle \alpha_i,x\rangle_X\le b_i,\ i=1,\dots,n\},
    \qquad \alpha_1,\dots,\alpha_n\in X.
\end{equation}
Denote $V_\alpha=\operatorname{Span}\{\alpha_1,...,\alpha_n\}$ and $V_\alpha^\bot$ be its orthogonal complement in $X$. Then there exist finitely many elements $\{y_i\}_{i=1}^p\subset V_\alpha$ and  $\{z_i\}_{i=1}^q\subset V_\alpha$ such that
\begin{equation}\label{equa:MW_theorem_inf}
    \mathcal{P}
    =
    V_\alpha^\perp
    +
    \operatorname{conv}(\{y_i\}_{i=1}^p)
    +
    \operatorname{cone}(\{z_i\}_{i=1}^q).
\end{equation}
Conversely, let $V\subset X$ be a finite-dimensional subspace and suppose that
\begin{equation}\label{mh:1}
\mathcal{P}
=
V^\perp
+
\operatorname{conv}(\{y_i\}_{i=1}^p)
+
\operatorname{cone}(\{z_i\}_{i=1}^q),
\qquad
\{y_i\}_{i=1}^p,\{z_i\}_{i=1}^q\subset V.
\end{equation}
Then $\mathcal{P}$ is a polyhedron of the form \eqref{equa:polyhedron_hilb} for some $\{\alpha_i\}_{i=1}^n\subset V$ and $\{b_i\}_{i=1}^n\subset \mathbb{R}$.
\end{theorem}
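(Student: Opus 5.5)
The plan is to reduce everything to the finite-dimensional Minkowski--Weyl theorem (Theorem~\ref{theorem:MW_theorem}) through the orthogonal decomposition $X=V_\alpha\oplus V_\alpha^\perp$. Since $V_\alpha$ is finite-dimensional, it is closed, so the projection theorem lets us write every $x\in X$ uniquely as $x=v+w$ with $v=P_{V_\alpha}x\in V_\alpha$ and $w\in V_\alpha^\perp$. For each $i$ we have $\alpha_i\in V_\alpha$, hence $\langle\alpha_i,x\rangle_X=\langle\alpha_i,v\rangle_X$. It follows that $x\in\mathcal P$ if and only if $v\in\mathcal P_\alpha:=\{v\in V_\alpha\mid\langle\alpha_i,v\rangle_X\le b_i,\ i=1,\dots,n\}$. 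This gives the identity $\mathcal P=V_\alpha^\perp+\mathcal P_\alpha$: if $x\in\mathcal P$ then $P_{V_\alpha}x\in\mathcal P_\alpha$, and conversely any $w+v$ with $w\in V_\alpha^\perp$, $v\in\mathcal P_\alpha$ satisfies the inequalities.

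Next, fix an orthonormal basis $e_1,\dots,e_d$ of $V_\alpha$, where $d=\dim V_\alpha\le n$. This basis gives an isometric linear isomorphism $\Phi:\mathbb R^d\to V_\alpha$. Under $\Phi$, the constraint $\langle\alpha_i,\Phi(c)\rangle_X\le b_i$ becomes $a_i^\top c\le b_i$ with $a_i=\Phi^{-1}(\alpha_i)$, so $\Phi^{-1}(\mathcal P_\alpha)$ is a polyhedron in $\mathbb R^d$. Theorem~\ref{theorem:MW_theorem} yields points $\tilde y_i$ and directions $\tilde z_i$ in $\mathbb R^d$ with $\Phi^{-1}(\mathcal P_\alpha)=\operatorname{conv}(\{\tilde y_i\})+\operatorname{cone}(\{\tilde z_i\})$. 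Linear maps commute with convex and conic hulls and with Minkowski sums, so setting $y_i=\Phi(\tilde y_i)$ and $z_i=\Phi(\tilde z_i)$ in $V_\alpha$ gives \eqref{equa:MW_theorem_inf}.

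Two degenerate cases need a remark. If $d=0$ (all $\alpha_i=0$), then $\mathcal P$ is either $X$ or empty, and both are covered: take $p=1$, $y_1=0$ in the first case, and allow $p=0$, i.e.\ the empty convex hull, in the second. If $\mathcal P=\emptyset$ in general, I would adopt the same convention, which is also implicit in Theorem~\ref{theorem:MW_theorem}.

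For the converse, decompose $X=V\oplus V^\perp$ and map $V$ to $\mathbb R^k$, $k=\dim V$, by an orthonormal basis $\Psi$. The set $Q:=\operatorname{conv}(\{y_i\})+\operatorname{cone}(\{z_i\})\subset V$ corresponds to a polyhedron in $\mathbb R^k$ by the converse part of Theorem~\ref{theorem:MW_theorem}, say $\{c\mid a_j^\top c\le b_j,\ j=1,\dots,n\}$. Set $\alpha_j=\Psi(a_j)\in V$. Then for $x=v+w$ with $v\in V$ and $w\in V^\perp$, we have $\langle\alpha_j,x\rangle_X=a_j^\top\Psi^{-1}(v)$. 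Hence $\{x\mid\langle\alpha_j,x\rangle_X\le b_j\ \forall j\}=V^\perp+Q=\mathcal P$, which is of the form \eqref{equa:polyhedron_hilb}.

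The main point needing care is the first step: the identity $\mathcal P=V_\alpha^\perp+\mathcal P_\alpha$, with the sum taken as a Minkowski sum. This relies on closedness of the finite-dimensional subspace, so that $X=V_\alpha\oplus V_\alpha^\perp$ holds. The rest is bookkeeping through the finite-dimensional theorem.
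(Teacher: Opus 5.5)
Your proof is correct and follows essentially the same route as the paper. In both directions you write $\mathcal P=V_\alpha^\perp+\mathcal P_\alpha$ (respectively $V^\perp+Q$) via the orthogonal decomposition and apply the finite-dimensional Minkowski--Weyl theorem. Your explicit isometry $\Phi:\mathbb R^d\to V_\alpha$ and your treatment of the degenerate cases ($V_\alpha=\{0\}$, $\mathcal P=\emptyset$) only make precise what the paper leaves implicit.
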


\begin{proof}
Firstly, for a polyhedron $\mathcal{P}$ given as \eqref{equa:polyhedron_hilb}, we note that $\mathcal{P}$ has the following decomposition
\begin{equation}
            \mathcal{P}=V_\alpha^\bot+ \{x\in V_\alpha|\langle \alpha_i,x\rangle_X\le b_i,i=1,...,n\}.
     \label{equa:proof1}
\end{equation}
Define
$
\mathcal{P}_\alpha
:=
\{x\in V_\alpha \mid \langle \alpha_i,x\rangle_X\le b_i,\ i=1,\dots,n\}$, which is a pointed polyhedron in the finite-dimensional space $V_\alpha$. By Theorem~\ref{theorem:MW_theorem}, there exist finitely many elements $\{y_i\}_{i=1}^p\subset V_\alpha$ and $\{z_i\}_{i=1}^q\subset V_\alpha$ such that
\begin{equation}
    \mathcal{P}_\alpha
    =
    \operatorname{conv}(\{y_i\}_{i=1}^p)
    +
    \operatorname{cone}(\{z_i\}_{i=1}^q).
    \label{equa:proof2}
\end{equation}
Combining \eqref{equa:proof1} and \eqref{equa:proof2} yields \eqref{equa:MW_theorem_inf}.

For the converse direction, let $V\subset X$ be finite-dimensional and suppose that \eqref{mh:1} holds true.
By Theorem~\ref{theorem:MW_theorem}, the set
$
\operatorname{conv}(\{y_i\}_{i=1}^p)
+
\operatorname{cone}(\{z_i\}_{i=1}^q)
$
is a polyhedron in $V$. Hence, there exist  $\{\alpha_i\}_{i=1}^n\subset V$ and $\{b_i\}_{i=1}^n\subset \mathbb{R}$ such that
%\[
$\{x\in V \mid \langle \alpha_i,x\rangle_X\le b_i,\ i=1,\dots,n\}
=
\operatorname{conv}(\{y_i\}_{i=1}^p)
+
\operatorname{cone}(\{z_i\}_{i=1}^q)$.
%\]
Consequently,
\begin{equation}
\mathcal{P}
=
V^\perp
+
\{x\in V \mid \langle \alpha_i,x\rangle_X\le b_i,\ i=1,\dots,n\}=
\{x\in X \mid \langle \alpha_i,x\rangle_X\le b_i,\ i=1,\dots,n\}.
\end{equation}
This proves the claim.
\end{proof}

In the decomposition \eqref{equa:MW_theorem_inf}, all lineality directions are absorbed into the orthogonal complement $V_\alpha^\perp$. Consequently, the remaining finite-dimensional polyhedron $\mathcal{P}_\alpha
=
\{x\in V_\alpha \mid \langle \alpha_i,x\rangle_X\le b_i,\ i=1,\dots,n\}$ is pointed, i.e., it contains no affine line. We record this observation in the following lemma.

\begin{lemma}\label{lem:pointed}
Let $V_\alpha=\operatorname{Span}\{\alpha_1,\dots,\alpha_n\}$. Then the set
%\begin{equation}
    $\mathcal{P}_\alpha=
    \{x\in V_\alpha \mid \langle \alpha_i,x\rangle_X\le b_i,\ i=1,\dots,n\}$
%\end{equation}
is a pointed polyhedron in $V_\alpha$.
\end{lemma}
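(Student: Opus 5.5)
The plan is to verify the two defining properties directly. First, $\mathcal{P}_\alpha$ is a polyhedron in $V_\alpha$. Second, it contains no affine line. For the first property, I restrict each functional $x\mapsto\langle\alpha_i,x\rangle_X$ to the finite-dimensional space $V_\alpha$. Each restriction is a linear functional on $V_\alpha$, so by finite-dimensional Riesz (or simply because $\alpha_i\in V_\alpha$) it is represented by $\alpha_i$ itself under the inner product of $V_\alpha$. Hence $\mathcal{P}_\alpha$ is a finite intersection of closed half-spaces in $V_\alpha\cong\mathbb{R}^{\dim V_\alpha}$, i.e.\ a polyhedron in the sense used in Theorem~\ref{theorem:MW_theorem}.

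For pointedness, I argue by contradiction. Assume $\mathcal{P}_\alpha$ is nonempty and contains an affine line $\{x_0+t d\mid t\in\mathbb{R}\}$ with $x_0\in\mathcal{P}_\alpha$ and $0\neq d\in V_\alpha$; the empty case is trivially pointed. Then for every $i$ and every $t\in\mathbb{R}$,
\begin{equation*}
\langle\alpha_i,x_0\rangle_X+t\,\langle\alpha_i,d\rangle_X\le b_i .
\end{equation*}
Letting $t\to\pm\infty$ forces $\langle\alpha_i,d\rangle_X=0$ for all $i=1,\dots,n$. Thus $d$ is orthogonal to every generator of $V_\alpha$, so $d\in V_\alpha^\perp$. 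Combined with $d\in V_\alpha$, this gives $d\in V_\alpha\cap V_\alpha^\perp=\{0\}$, since $\langle d,d\rangle_X=0$. This contradicts $d\neq0$. Equivalently, the lineality space $\{d\in V_\alpha\mid\langle\alpha_i,d\rangle_X=0\ \forall i\}$ of $\mathcal{P}_\alpha$ is trivial, which is the standard characterization of a pointed polyhedron.

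No step is a genuine obstacle. The only point needing care is the convention for pointedness: the statement should either allow $\mathcal{P}_\alpha=\emptyset$ as vacuously pointed, or note that nonemptiness is inherited from $\mathcal{P}\neq\emptyset$ via the decomposition \eqref{equa:proof1}. The other point to make explicit is that the relevant orthogonality takes place inside $X$ while $d$ lies in $V_\alpha$; this is exactly why $d$ vanishes.
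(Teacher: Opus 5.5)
Your proof is correct and follows the same route as the paper: both show that the lineality space $\{d\in V_\alpha \mid \langle\alpha_i,d\rangle_X=0,\ i=1,\dots,n\}$ is trivial because $V_\alpha\cap V_\alpha^\perp=\{0\}$. The only difference is that you derive this characterization from a contained line (letting $t\to\pm\infty$) and check the polyhedron property and the empty case explicitly, whereas the paper simply invokes the standard equivalence between pointedness and a trivial lineality space.
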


\begin{proof}
The lineality space of $\mathcal{P}_\alpha$ is given by
%\[
$\operatorname{lin}(\mathcal{P}_\alpha)
:=
\{x\in V_\alpha \mid \langle \alpha_i,x\rangle_X=0,\ i=1,\dots,n\}$.
%\]
If $x\in \operatorname{lin}(\mathcal{P}_\alpha)$, then $x\in V_\alpha=\operatorname{Span}\{\alpha_1,\dots,\alpha_n\}$ and $x$ is orthogonal to every $\alpha_i$. Hence $x$ is orthogonal to $V_\alpha$, and therefore $x=0$. Thus, $\operatorname{lin}(\mathcal{P}_\alpha)=\{0\}.$ Since a polyhedron is pointed if and only if it contains no affine line, equivalently if and only if its lineality space is trivial, the result follows.
\end{proof}

%\subsubsection*{Computation of vertices and extreme rays}

{\bf Computation of vertices and extreme rays.} Theorem~\ref{Theo:decomp_hilbert} yields a decomposition of $\mathcal{P}$ in which the sets 
$\{y_i\}_{i=1}^p$ and $\{z_i\}_{i=1}^q$ correspond to the vertices and extreme rays of the pointed polyhedron $\mathcal{P}_\alpha$, respectively. Let $r:=\dim(V_\alpha)$. Following \cite{bertsekas2003convex}, vertices of $\mathcal{P}_\alpha$ can be obtained by examining active constraint sets of size $r$. 
Specifically, for each index set
%\[
$J=\{j_1,\dots,j_r\}\subseteq \{1,\dots,n\}$,
%\]
such that the vectors $\{\alpha_{j_k}\}_{k=1}^r$ are linearly independent, we solve the following system in $V_\alpha$:
\[
\langle \alpha_{j_k},y\rangle_X=b_{j_k},\qquad k=1,\dots,r.
\]
Then $y$ is a vertex of $\mathcal{P}_\alpha$ if the resulting point satisfies  $y\in V_\alpha\cap\mathcal{P}$.

Similarly, we obtain the extreme rays by examining active constraint sets of size $r-1$. 
For each index set
%\[
$J=\{j_1,\dots,j_{r-1}\}\subseteq\{1,\dots,n\}$,
%\]
with linearly independent $\{\alpha_{j_k}\}_{k=1}^{r-1}$, we compute a nonzero vector $z\in V_\alpha$ spanning the one-dimensional space
%\[
$\{x\in V_\alpha\mid \langle \alpha_{j_k},x\rangle_X=0,\ k=1,\dots,r-1\}$.
%\]
If $z$ satisfies
%\[
$\langle \alpha_i,z\rangle_X\le 0$, $i=1,\dots,n$,
%\]
then $z$ generates an extreme ray of $\mathcal{P}_\alpha$. 
If instead $-z$ satisfies the above inequalities, then $-z$ generates an extreme ray of $\mathcal{P}_\alpha$.

\subsection{Constrained neural parameterization of polyhedra}

Let $X$ be a Hilbert space of functions defined on a bounded domain $\Omega\subset \mathbb{R}^d$, and let $\mathcal{P}$ be a polyhedron represented as in \eqref{equa:MW_theorem_inf}. We now introduce a constrained neural parameterization for $\mathcal{P}$. To this end, we introduce two families of scalar functions, $\{g_i\}_{i=1}^p$ and $\{h_i\}_{i=1}^q$, satisfying the following conditions.

\begin{condition}[Conditions on $\{g_i\}_{i=1}^p$ and $\{h_i\}_{i=1}^q$]
\label{conditions}
\leavevmode\par
\begin{enumerate}
    \item For each $i$,  $g_i:\mathbb{R}^p\to\mathbb{R}$ and $h_i:\mathbb{R}\to\mathbb{R}$ are differentiable.
    \item The image of the mapping $(g_1,\dots,g_p):\mathbb{R}^p\to\mathbb{R}^p$ is dense in the probability simplex
    \begin{equation}
        \mathcal{C}:=\left\{(c_1,\dots,c_p)\in\mathbb{R}^p \,\middle|\, c_i\ge 0,\ i=1,\dots,p,\ \sum_{i=1}^p c_i=1\right\}.
    \end{equation}
    \item For each $i=1,\dots,q$, the range of $h_i$ is a dense subset of $[0,\infty)$.
\end{enumerate}
\end{condition}

Suitable choices for $\{g_i\}_{i=1}^p$ include the softmax mapping
\begin{equation}
    g_i(\lambda_1,\dots,\lambda_p)
    =
    \frac{e^{\lambda_i}}{\sum_{j=1}^p e^{\lambda_j}},
    \qquad i=1,\dots,p,
\end{equation}
or the recursive construction
\begin{equation}
    g_1 = \sin^2(\lambda_1),\, g_2=(1-g_1)\sin^2(\lambda_2),...,\,g_{p-1}=\big(1-\sum_{i=1}^{p-2}g_i\big)\sin^2(\lambda_{p-1}),\,g_p = 1-\sum_{i=1}^{p-1}g_i.
\end{equation}
The second construction maps onto the entire simplex $\mathcal{C}$, whereas the softmax mapping yields strictly positive coefficients and therefore does not attain the boundary of $\mathcal{C}$. Representative choices for $\{h_i\}_{i=1}^q$ include
\begin{equation}
    h_i(\gamma_i)=\gamma_i^2,
    \qquad\text{or}\qquad
    h_i(\gamma_i)=e^{\gamma_i},
    \qquad i=1,\dots,q.
\end{equation}
To construct a constrained neural parameterization of $\mathcal{P}$, we introduce a neural network $\mathcal{N}(\cdot,\theta)$ with trainable parameter set $\theta$ and a suitable activation function $\sigma$ such that $\mathcal{N}(\cdot,\theta)\in X$, together with trainable scalars $\{\lambda_i\}_{i=1}^p$ and $\{\gamma_i\}_{i=1}^q$; see e.g., \cite{lecun2015deep} for a definition of neural networks.  For simplicity, we suppress the explicit dependence on the activation function $\sigma$ in the notation $\mathcal{N}(\cdot,\theta)$. Let $\{\hat{\alpha}_i\}_{i=1}^m$ be an orthonormal basis of $V_\alpha:=\operatorname{Span}\{\alpha_1,\dots,\alpha_n\}$. We define
\begin{equation}\label{equa:CNP_polyhedra}
     U(\cdot,\hat{\theta})=\mathcal{N}(\cdot,\theta)-\sum_{i=1}^m \langle \mathcal{N}(\cdot,\theta),\hat{\alpha}_i\rangle_X\hat{\alpha}_i+ \sum_{i=1}^p g_i(\lambda_1,...,\lambda_p) y_i + \sum_{i=1}^q h_i(\gamma_i) z_i,\quad \hat{\theta} = \{\theta, \{\lambda_i\},\{\gamma_i\}\}.
\end{equation}
This construction is directly motivated by the decomposition in Theorem~\ref{Theo:decomp_hilbert}: the first two terms project the neural network onto the orthogonal complement $V_\alpha^\perp$, while the remaining terms parameterize the pointed polyhedron $\mathcal{P}_\alpha$. Moreover, the gradient of $U(x,\hat{\theta})$ is given by
\begin{equation}
   \nabla_x U(x,\hat{\theta})=
   \nabla_x \mathcal{N}(x,\theta)
   -
   \sum_{i=1}^m \langle \mathcal{N}(\cdot,\theta),\hat{\alpha}_i\rangle_X \nabla_x \hat{\alpha}_i(x)
   +
   \sum_{i=1}^p g_i(\lambda_1,\dots,\lambda_p)\nabla_x y_i(x)
   +
   \sum_{i=1}^q h_i(\gamma_i)\nabla_x z_i(x),
\end{equation}

We assume that the architecture $\mathcal{N}(\cdot,\theta)$ possesses the universal approximation property in $X$ with respect to the norm $\|\cdot\|_X$, that is, the family $\{\mathcal{N}(\cdot,\theta):\theta\}$ is dense in $X$. Such density results are well established for many classes of feedforward neural networks with non-polynomial activation functions in common function spaces; see, for example, \cite{hornik1991approximation,hornik1993some,zhang2024deep}. Under this assumption, the constrained neural parameterization \eqref{equa:CNP_polyhedra} inherits a universal approximation property on $\mathcal{P}$.

\begin{lemma}
Let $U(\cdot,\hat{\theta})$ be defined by \eqref{equa:CNP_polyhedra}, and suppose that the functions $\{g_i\}_{i=1}^p$ and $\{h_i\}_{i=1}^q$ satisfy Condition~\ref{conditions}. Then $U(\cdot,\hat{\theta})\in\mathcal{P}$ for every $\hat{\theta}$. Moreover, if $\mathcal{N}(\cdot,\theta)$ is a universal approximator in $X$, then the family $\{U(\cdot,\hat{\theta}):\hat{\theta}\}$ is dense in $\mathcal{P}$ with respect to the norm $\|\cdot\|_X$.
\end{lemma}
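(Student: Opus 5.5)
Write $P_\alpha$ for the orthogonal projection onto $V_\alpha$, i.e. $P_\alpha v=\sum_{i=1}^m\langle v,\hat\alpha_i\rangle_X\hat\alpha_i$. Then the first two terms in \eqref{equa:CNP_polyhedra} are exactly $(I-P_\alpha)\mathcal{N}(\cdot,\theta)\in V_\alpha^\perp$. The proof will use the decomposition of Theorem~\ref{Theo:decomp_hilbert} throughout.

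\textbf{Feasibility.} We show that $U(\cdot,\hat\theta)\in\mathcal{P}$ for every $\hat\theta$.
\begin{enumerate}
\item By Condition~\ref{conditions}, $(g_1,\dots,g_p)$ takes values in $\mathcal{C}$. Here we read ``dense in $\mathcal{C}$'' as including the requirement that the image lies in $\mathcal{C}$, which holds for both proposed choices. Hence $\sum_i g_i y_i\in\operatorname{conv}(\{y_i\})$.
\item Each $h_i\ge 0$, so $\sum_i h_i z_i\in\operatorname{cone}(\{z_i\})$.
\item It follows that $U\in V_\alpha^\perp+\operatorname{conv}(\{y_i\})+\operatorname{cone}(\{z_i\})=\mathcal{P}$ by \eqref{equa:MW_theorem_inf}.
\end{enumerate}
One can also check this directly. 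For $w\in V_\alpha^\perp$ we have $\langle\alpha_j,w\rangle_X=0$, and convex and conic combinations of vertices and extreme rays of $\mathcal{P}_\alpha$ satisfy $\langle\alpha_j,\cdot\rangle_X\le b_j$.

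\textbf{Density.} Fix $x\in\mathcal{P}$ and $\varepsilon>0$.
\begin{enumerate}
\item By Theorem~\ref{Theo:decomp_hilbert}, write $x=w+\sum_i c_i y_i+\sum_i t_i z_i$ with $w\in V_\alpha^\perp$, $c\in\mathcal{C}$ and $t_i\ge0$.
\item By universal approximation, choose $\theta$ with $\|\mathcal{N}(\cdot,\theta)-w\|_X<\varepsilon$. Since $I-P_\alpha$ is an orthogonal projection, it has norm at most $1$, and $(I-P_\alpha)w=w$. Therefore $\|(I-P_\alpha)\mathcal{N}(\cdot,\theta)-w\|_X<\varepsilon$.
\item Using density of the image of $(g_1,\dots,g_p)$ in $\mathcal{C}$, choose $\lambda$ with $|g_i(\lambda)-c_i|<\varepsilon$ for all $i$.
\item Using density of the range of $h_i$ in $[0,\infty)$, choose $\gamma_i$ with $|h_i(\gamma_i)-t_i|<\varepsilon$.
\item The triangle inequality then gives
\[
\|U(\cdot,\hat\theta)-x\|_X\le \varepsilon\Bigl(1+\sum_{i=1}^p\|y_i\|_X+\sum_{i=1}^q\|z_i\|_X\Bigr),
\]
which is arbitrarily small.
\end{enumerate}

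The main subtlety is that the approximation of $w$ must stay in $V_\alpha^\perp$ without destroying accuracy. The nonexpansiveness of the projection $I-P_\alpha$ handles this. A second point needing care is that the finite-dimensional components are approximated independently, which is legitimate because the parameters $\theta$, $\lambda$ and $\gamma$ are decoupled in \eqref{equa:CNP_polyhedra}.
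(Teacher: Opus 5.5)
Your proof is correct and follows the paper's argument: feasibility comes from the decomposition $\mathcal{P}=V_\alpha^\perp+\operatorname{conv}(\{y_i\})+\operatorname{cone}(\{z_i\})$ (the paper checks the three component conditions $\langle\cdot,\alpha_j\rangle_X=0$, $\le b_j$, $\le 0$ directly), and density comes from approximating the three components of $u=u^\perp+\sum_i g_i^*y_i+\sum_i h_i^*z_i$ independently. You also make explicit two points the paper leaves implicit: the image of $(g_1,\dots,g_p)$ must actually lie in $\mathcal{C}$, and $I-P_\alpha$ is nonexpansive with $(I-P_\alpha)u^\perp=u^\perp$, which is what justifies the paper's ``it follows that''.
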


\begin{proof}
Let $U(\cdot,\hat{\theta})$ be given by \eqref{equa:CNP_polyhedra}. It is easy to verify that
\begin{equation}
\begin{split}
    \begin{cases}
       \langle \mathcal{N}(\cdot,\theta)-\sum_{i=1}^m \langle \mathcal{N}(\cdot,\theta),\hat{\alpha}_i\rangle_X \hat{\alpha}_i,\alpha_j\rangle_X=0,\quad &j=1,...,n,\\
\langle \sum_{i=1}^p g_i(\lambda_1,...,\lambda_p) y_i,\alpha_j\rangle_X\le b_j.\quad &j=1,...,n,       \\
\langle\sum_{i=1}^q h_i(\gamma_i) z_i,\alpha_j\rangle_X\le 0,\quad & j=1,...,n.
    \end{cases}
    \end{split}
\end{equation}
Therefore $U(\cdot,\hat{\theta})\in \mathcal{P}$. Next, let $u\in\mathcal{P}$ be arbitrary. By Theorem~\ref{Theo:decomp_hilbert}, there exist
\[
u^\perp\in V_\alpha^\perp,\qquad (g_1^*,\dots,g_p^*)\in\mathcal{C},\qquad h_i^*\ge 0,\ i=1,\dots,q,
\]
such that $u=u^\perp+\sum_{i=1}^p g_i^* y_i+\sum_{i=1}^q h_i^* z_i$. Since the image of $(g_1,\dots,g_p)$ is dense in $\mathcal{C}$, the range of each $h_i$ is dense in $[0,\infty)$, and the family $\{\mathcal{N}(\cdot,\theta):\theta\}$ is dense in $X$, it follows that $\{U(\cdot,\hat{\theta}):\hat{\theta}\}$ is dense in $\mathcal{P}$.
\end{proof}

\begin{remark}The inner products in the CNP \eqref{equa:CNP_polyhedra} are evaluations of the original constraint functionals, which are also required in methods such as penalty, augmented Lagrangian, and primal-dual methods. Thus the CNP does not introduce an additional type of constraint evaluation. The vertices and extreme rays are computed only once for a finite-dimensional polyhedron, and this offline step is inexpensive when the number of constraints is moderate.
\end{remark}

\subsubsection{Handling indirectly specified constraint functionals}\label{sec:alpha_i}

In certain applications, the elements $\{\alpha_i\}\subset X$ defining the polyhedron \eqref{equa:polyhedron_hilb} may not be directly available. For example, let $X=H^1(\Omega)$, the usual Sobolev space \cite{FournierAdams} with some domain $\Omega\subseteq\mathbb{R}^d$, and let $\Omega_1\subset\Omega$. Consider the constraint set
\begin{equation}\label{equa:cons_indi}
    \{u\in H^1(\Omega)\mid \langle \chi_{\Omega_1},u\rangle_{L^2(\Omega)}\le b\},
\end{equation}
where $\chi_{\Omega_1}:\Omega\to\mathbb{R}$ denotes the characteristic function of $\Omega_1$; i.e., $\chi_{\Omega_1}(x)=1$ if $x\in\Omega_1$ and $\chi_{\Omega_1}(x)=0$ otherwise.
Since we may have $\chi_{\Omega_1}\notin H^1(\Omega)$, the parameterization introduced above cannot be applied directly. Nevertheless, by the Riesz representation theorem, there exists a unique $\alpha\in H^1(\Omega)$ such that
\begin{equation}\label{equa:riesz}
    \langle \alpha,v\rangle_{H^1(\Omega)}
    =
    \langle \chi_{\Omega_1},v\rangle_{L^2(\Omega)},
    \qquad \forall v\in H^1(\Omega),
\end{equation}
and the constraint set \eqref{equa:cons_indi} is therefore equivalent to
%\begin{equation}
    $\{u\in H^1(\Omega)\mid \langle \alpha,u\rangle_{H^1(\Omega)}\le b\}$.
%\end{equation}
Since solving \eqref{equa:riesz} for $\alpha$  may be computationally expensive in high-dimensional settings or large-scale problems, we introduce a more efficient construction for a class of such constraints. For the sake of generality, let $X$ and $Y$ be Hilbert spaces such that $X\subset Y$. Consider the constraint set
\begin{equation}\label{equa:polydra_general}
    \mathcal{P}
    =
    \{u\in X\mid \langle \alpha_i,u\rangle_Y\le b_i,\ \alpha_i\in Y,\ i=1,\dots,n\}.
\end{equation}
Before presenting the parameterization, we establish the following auxiliary result.

\begin{lemma}\label{lemma:orthocond}
Let $X$ and $Y$ be Hilbert spaces with $X\subset Y$. Let $\{\hat{\alpha}_i\}_{i=1}^m$ be an orthonormal family in $Y$ satisfying
\begin{equation}
\sum_{i=1}^m \operatorname{dist}_Y(\hat{\alpha}_i,X)<1,
\qquad\text{where}\qquad 
\operatorname{dist}_Y(\alpha,X):=\inf_{x\in X}\|\alpha-x\|_Y,\quad \alpha\in Y.
\end{equation}
Then there exist elements $\{\beta_i\}_{i=1}^m\subset X$ such that
\begin{equation}\label{equa:orthocod2}
\langle \hat{\alpha}_i,\beta_j\rangle_Y=\delta_{ij},\qquad i,j=1,\dots,m,
\end{equation}
with $\delta_{ij}$ denoting the Kronecker delta; i.e., $\delta_{ij}=1$ if $i=j$ and $\delta_{ij}=0$ else.
\end{lemma}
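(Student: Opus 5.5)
The plan is to reduce the statement to the invertibility of an $m\times m$ Gram-type matrix. Set $\varepsilon_i:=\operatorname{dist}_Y(\hat{\alpha}_i,X)$, so that $\sum_i\varepsilon_i<1$. Since this inequality is strict, we can fix some $\eta>0$ with $\sum_i(\varepsilon_i+\eta)<1$. By the definition of the infimum, for each $i$ we then choose $x_i\in X$ with
\[
\|\hat{\alpha}_i-x_i\|_Y\le \varepsilon_i+\eta=:\delta_i .
\]
Now form the matrix $A\in\mathbb{R}^{m\times m}$ with entries $A_{ij}:=\langle \hat{\alpha}_i,x_j\rangle_Y$. The candidate elements are $\beta_j:=\sum_{k=1}^m c_{kj}x_k\in X$. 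For these,
\[
\langle\hat{\alpha}_i,\beta_j\rangle_Y=\sum_k A_{ik}c_{kj}=(AC)_{ij}.
\]
So \eqref{equa:orthocod2} holds as soon as $C=A^{-1}$, and everything reduces to showing that $A$ is invertible.

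To prove invertibility, write $A=I-E$ with $E_{ij}=\langle\hat{\alpha}_i,\hat{\alpha}_j-x_j\rangle_Y$; this uses the orthonormality $\langle\hat{\alpha}_i,\hat{\alpha}_j\rangle_Y=\delta_{ij}$. I plan to show $\|E\|<1$ in a suitable operator norm and then invoke the Neumann series. For any $c\in\mathbb{R}^m$, put $w:=\sum_j c_j(\hat{\alpha}_j-x_j)\in Y$. Since $\{\hat{\alpha}_i\}$ is orthonormal, Bessel's inequality gives
\[
\|Ec\|_2=\Big(\sum_i\langle\hat{\alpha}_i,w\rangle_Y^2\Big)^{1/2}\le\|w\|_Y .
\]
The triangle inequality together with Cauchy–Schwarz then yields
\[
\|w\|_Y\le\sum_j|c_j|\,\delta_j\le \max_j|c_j|\sum_j\delta_j\le\|c\|_2\sum_j\delta_j .
\]
Hence $\|E\|_{2\to2}\le\sum_j\delta_j<1$, so $I-E$ is invertible and the $\beta_j$ defined above satisfy the biorthogonality relations.

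The main obstacle is bounding $\|E\|$ sharply enough that the hypothesis $\sum_i\operatorname{dist}_Y(\hat{\alpha}_i,X)<1$ suffices, rather than a stronger condition such as a square-sum bound. The argument above avoids this by combining Bessel's inequality (on the orthonormal side) with the crude estimate $\sum|c_j|\delta_j\le\|c\|_\infty\sum\delta_j$. The only other delicate point is that the infimum need not be attained, since $X$ need not be closed in $Y$. This is exactly why the slack $\eta$ is introduced, and it uses the strictness of the inequality in the hypothesis.
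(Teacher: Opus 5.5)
Your proof is correct, and its overall structure matches the paper's. You pick approximants $x_j\in X$ (the paper's $\gamma_j$) with $\sum_j\|\hat{\alpha}_j-x_j\|_Y<1$, form $A_{ij}=\langle\hat{\alpha}_i,x_j\rangle_Y$, and set $\beta_j=\sum_k (A^{-1})_{kj}x_k$. Your explicit slack $\eta$, which handles a possibly unattained infimum, spells out correctly what the paper does implicitly.

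The routes differ only in how invertibility of $A$ is shown. The paper bounds the entries one at a time: $|A_{ii}|\ge 1-\|x_i-\hat{\alpha}_i\|_Y$, and $|A_{ij}|\le\|x_j-\hat{\alpha}_j\|_Y$ for $i\neq j$. It then concludes by strict row diagonal dominance (Levy--Desplanques). This uses only $\|\hat{\alpha}_i\|_Y=1$ and pairwise orthogonality, and the $\ell^1$ hypothesis $\sum_j\operatorname{dist}_Y(\hat{\alpha}_j,X)<1$ is exactly what the row sums require. You instead write $A=I-E$, bound $\|E\|_{2\to2}$ using Bessel's inequality for the whole orthonormal family at once, and invoke a Neumann series.

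Your route can prove more than you extract from it. The step you call ``Cauchy--Schwarz'' is really the bound $\|c\|_\infty\le\|c\|_2$. If you apply Cauchy--Schwarz to $\sum_j|c_j|\delta_j$, you get $\|E\|_{2\to2}\le(\sum_j\delta_j^2)^{1/2}$. Bessel applied column-wise gives the same bound on $\|E\|_F$. Choosing $\eta$ with $\sum_j(\varepsilon_j+\eta)^2<1$, the lemma then holds under the weaker hypothesis $\sum_j\varepsilon_j^2<1$.

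So your closing remark has the comparison backwards. A square-sum condition is weaker than the stated one, not stronger: $\sum_j\varepsilon_j<1$ forces each $\varepsilon_j<1$, hence $\sum_j\varepsilon_j^2\le\sum_j\varepsilon_j<1$. Your Hilbert-space, norm-based argument reaches this weaker condition, whereas the paper's entrywise diagonal-dominance argument naturally stops at the $\ell^1$ condition.
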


\begin{proof}
By the distance condition, there exist $\{\gamma_i\}_{i=1}^m\subset X$ satisfying $\sum_{i=1}^m\|\gamma_i-\hat{\alpha}_i\|_Y<1.$ Therefore
\begin{equation}
|\langle \hat{\alpha}_i,\gamma_i\rangle_Y|
=|\langle \hat{\alpha}_i,\gamma_i-\hat{\alpha}_i+\hat{\alpha}_i\rangle_Y|
=|1+\langle \hat{\alpha}_i,\gamma_i-\hat{\alpha}_i\rangle_Y|
\ge 1-\|\gamma_i-\hat{\alpha}_i\|_Y,\quad i=1,...,m.
\end{equation}
For $i\neq j$, we have
\begin{equation}
|\langle \hat{\alpha}_i,\gamma_j\rangle_Y|
=|\langle \hat{\alpha}_i,\gamma_j-\hat{\alpha}_j+\hat{\alpha}_j\rangle_Y|
=|\langle \hat{\alpha}_i,\gamma_j-\hat{\alpha}_j\rangle_Y|
\le \|\gamma_j-\hat{\alpha}_j\|_Y.
\end{equation}
Consequently,
\begin{equation}
|\langle \hat{\alpha}_i,\gamma_i\rangle_Y|
> \sum_{j\neq i}|\langle \hat{\alpha}_i,\gamma_j\rangle_Y|,\qquad i=1,\dots,m.
\end{equation}
Thus the matrix $A=(A_{ij})_{i,j=1}^m$ defined by $A_{ij}:=\langle \hat{\alpha}_i,\gamma_j\rangle_Y$ is strictly diagonally dominant and therefore invertible. Let $B=A^{-1}$ and define
\begin{equation}
\beta_j:=\sum_{k=1}^m B_{kj}\gamma_k\in X,\qquad j=1,\dots,m.
\end{equation}
Then
\begin{equation}
\langle \hat{\alpha}_i,\beta_j\rangle_Y
=\sum_{k=1}^m B_{kj}\langle \hat{\alpha}_i,\gamma_k\rangle_Y
=\sum_{k=1}^m A_{ik}B_{kj}
=\delta_{ij},
\end{equation}
which completes the proof.
\end{proof}

\begin{remark}
The hypothesis of Lemma~\ref{lemma:orthocond} is satisfied in many situations. For instance, when $X=H^k(\Omega)$ and $Y=H^l(\Omega)$ with $k>l$, the space $X$ is dense in $Y$, which is a sufficient condition for the hypothesis. To construct the elements $\{\gamma_i\}$ in the proof, observe that if $\hat{\alpha}_i\in X$, we may simply set $\gamma_i=\hat{\alpha}_i$. Otherwise, we can choose a finite-dimensional subspace $X_f\subset X$ and let $\gamma_i$ be the orthogonal projection of $\hat{\alpha}_i$ onto $X_f$ with respect to the inner product of $Y$. By selecting $X_f$ appropriately, the distance condition can be ensured.
\end{remark}

\begin{remark}\label{remark2}
    When $X$ is not dense in $Y$, the hypothesis of Lemma~\ref{lemma:orthocond} may not hold. This issue can be handled by observing that the constraint set \eqref{equa:polydra_general} is equivalent to 
%\[
 $\mathcal{P}=\{u\in X\mid \langle \tilde{\alpha}_i,u\rangle_Y \le b_i,\tilde{\alpha}_i\in \overline{X}^Y, i=1,...,n\}$,
%\]
where $\overline{X}^Y$ is the closure of $X$ in $Y$, and $\tilde{\alpha}_i$ is the orthogonal projection of $\alpha_i$ onto the space $\overline{X}^Y$ in $Y$. Note that $\overline{X}^Y$ is also a Hilbert space with the inner product $\langle\cdot,\cdot \rangle_{Y}$. Then by replacing $Y$ with $\overline{X}^Y$, the hypothesis of Lemma~\ref{lemma:orthocond} is automatically satisfied.
\end{remark}

Let $\{\hat{\alpha}_i\}_{i=1}^m$ be an orthonormal basis of $\operatorname{Span}\{\alpha_1,\dots,\alpha_n\}$ in $Y$. By Remark~\ref{remark2}, without loss of generality, we assume the hypothesis of Lemma~\ref{lemma:orthocond} is satisfied. By Lemma~\ref{lemma:orthocond}, there exist $\{\beta_i\}_{i=1}^m\subset X$ such that $ \langle \hat{\alpha}_i,\beta_j\rangle_Y=\delta_{ij},\, i,j=1,\dots,m.$ Define
\begin{equation}\label{equa:V1}
    V_1
    :=
    \left\{
    u-\sum_{i=1}^m \langle \hat{\alpha}_i,u\rangle_Y \beta_i
    \,\middle|\,
    u\in X
    \right\},
\end{equation}
\begin{equation}\label{equa:V2}
    V_2
    :=
    \{u\in X_\beta\mid \langle \alpha_i,u\rangle_Y\le b_i,\ i=1,\dots,n\},
    \qquad
    X_\beta:=\operatorname{Span}\{\beta_1,\dots,\beta_m\}.
\end{equation}
The next lemma provides a decomposition of the constraint set \eqref{equa:polydra_general}.

\begin{lemma}\label{lemma:decom_general}
Let $\mathcal{P}$, $V_1$, and $V_2$ be defined by \eqref{equa:polydra_general}, \eqref{equa:V1}, and \eqref{equa:V2}, respectively. Then
\[
\mathcal{P}=V_1+V_2.
\]
Moreover, there exist finitely many points $\{y_i\}_{i=1}^p\subset X_\beta$ and finitely many vectors $\{z_i\}_{i=1}^q\subset X_\beta$ such that
\begin{equation}\label{equa:decomll}
    V_2
    =
    \operatorname{conv}(\{y_i\}_{i=1}^p)
    +
    \operatorname{cone}(\{z_i\}_{i=1}^q).
\end{equation}
\end{lemma}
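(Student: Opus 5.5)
The plan is to use the oblique projection $Q u := \sum_{i=1}^m \langle \hat{\alpha}_i,u\rangle_Y \beta_i$, which maps $X$ into $X_\beta$. By Lemma~\ref{lemma:orthocond}, $Q$ is idempotent, since $Q\beta_j = \sum_i \delta_{ij}\beta_i = \beta_j$. Then $V_1 = (I-Q)X = \ker Q$. Moreover, $V_1$ coincides with the annihilator $\{u\in X \mid \langle \alpha_i,u\rangle_Y=0,\ i=1,\dots,n\}$. For $u\in X$ and each $k$,
\[
\Big\langle \hat{\alpha}_k, u-\sum_{i=1}^m \langle \hat{\alpha}_i,u\rangle_Y\beta_i\Big\rangle_Y = \langle\hat\alpha_k,u\rangle_Y-\langle\hat\alpha_k,u\rangle_Y=0 .
\]
Since each $\alpha_j$ is a linear combination of the $\hat\alpha_k$, every element of $V_1$ is $Y$-orthogonal to all $\alpha_j$. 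Conversely, if $\langle\alpha_j,u\rangle_Y=0$ for all $j$, then $\langle\hat\alpha_k,u\rangle_Y=0$ for all $k$, because the $\hat\alpha_k$ lie in the span of the $\alpha_j$. Hence $Qu=0$ and $u=(I-Q)u\in V_1$.

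With this in hand, the set equality follows by splitting $u = (I-Q)u + Qu$. For the inclusion $\mathcal{P}\subset V_1+V_2$, take $u\in\mathcal{P}$. Then $(I-Q)u\in V_1$ and $Qu\in X_\beta$. Since $\langle \alpha_j,(I-Q)u\rangle_Y=0$, we get $\langle\alpha_j,Qu\rangle_Y=\langle\alpha_j,u\rangle_Y\le b_j$, so $Qu\in V_2$. For the reverse inclusion, take $v_1\in V_1$ and $v_2\in V_2$. Then $v_1+v_2\in X$ and $\langle\alpha_j,v_1+v_2\rangle_Y=\langle\alpha_j,v_2\rangle_Y\le b_j$, so $v_1+v_2\in\mathcal{P}$.

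For the representation \eqref{equa:decomll}, identify $X_\beta$ with $\mathbb{R}^m$ via the coordinates $u=\sum_i c_i\beta_i$; the $\beta_i$ are linearly independent by the biorthogonality relation \eqref{equa:orthocod2}. Each constraint becomes $\sum_i c_i\langle\alpha_j,\beta_i\rangle_Y\le b_j$, a linear inequality in $c$. So $V_2$ corresponds to a polyhedron in $\mathbb{R}^m$, and Theorem~\ref{theorem:MW_theorem} provides finitely many $y_i, z_i$ there. Mapping them back through $c\mapsto\sum c_i\beta_i$ gives elements of $X_\beta$ with the required representation, because this linear isomorphism preserves convex and conic hulls.

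The main obstacle is that the Minkowski--Weyl theorem presupposes $V_2\neq\emptyset$ (or allows $p=0$). We should also confirm that $V_2$ is pointed, so that the $y_i$ can be taken as vertices. Pointedness follows as in Lemma~\ref{lem:pointed}. A line direction $d\in X_\beta$ would satisfy $\langle \alpha_j,d\rangle_Y=0$ for all $j$, which forces $Qd=0$. But $Qd=d$ for $d\in X_\beta$, so $d=0$. Pointedness is not strictly needed for the stated claim, however, and the empty case is trivial, with both sides empty.
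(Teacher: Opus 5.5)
Your proof is correct and follows essentially the paper's argument: the same splitting $u=(u-Qu)+Qu$, biorthogonality to show that every $\langle\alpha_j,\cdot\rangle_Y$ vanishes on $V_1$, and Minkowski--Weyl applied in the finite-dimensional space $X_\beta$. The differences are cosmetic. You obtain $\langle\alpha_j,Qu\rangle_Y=\langle\alpha_j,u\rangle_Y$ from the annihilator property of $(I-Q)u$ rather than from the paper's double-sum expansion. Your extra remarks (idempotence of $Q$, linear independence of the $\beta_i$, pointedness of $V_2$, and the empty case) are all correct but not needed for the statement.
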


\begin{proof}
We first prove that $V_1+V_2\subseteq\mathcal{P}$. Let $v_1\in V_1$. By definition, there exists $u\in X$ such that $
v_1=u-\sum_{i=1}^m \langle \hat{\alpha}_i,u\rangle_Y \beta_i.$
By the biorthogonality relation \eqref{equa:orthocod2}, for each $j=1,\dots,m$, we have
\begin{equation*}
    \langle \hat{\alpha}_j,v_1\rangle_Y=
\left\langle
\hat{\alpha}_j,
u-\sum_{i=1}^m \langle \hat{\alpha}_i,u\rangle_Y \beta_i
\right\rangle_Y=
\langle \hat{\alpha}_j,u\rangle_Y
-
\sum_{i=1}^m \langle \hat{\alpha}_i,u\rangle_Y \langle \hat{\alpha}_j,\beta_i\rangle_Y =
\langle \hat{\alpha}_j,u\rangle_Y-\langle \hat{\alpha}_j,u\rangle_Y
=0.
\end{equation*}
Since $\{\hat{\alpha}_i\}_{i=1}^m$ forms an orthonormal basis of $\operatorname{Span}\{\alpha_1,\dots,\alpha_n\}$ in $Y$, it follows that
$\langle \alpha_j,v_1\rangle_Y=0$ for $j=1,\dots,n$. Therefore, for any $v_2\in V_2$, we obtain
\begin{equation}
\langle \alpha_j,v_1+v_2\rangle_Y
=
\langle \alpha_j,v_1\rangle_Y+\langle \alpha_j,v_2\rangle_Y
\le b_j,
\qquad j=1,\dots,n,    
\end{equation}
which implies that $v_1+v_2\in\mathcal{P}$. Hence $V_1+V_2\subseteq\mathcal{P}$.

Conversely, let $u\in\mathcal{P}$. Define
\[
u_1:=u-\sum_{i=1}^m \langle \hat{\alpha}_i,u\rangle_Y\beta_i
\in V_1, \quad u_2:=\sum_{i=1}^m \langle \hat{\alpha}_i,u\rangle_Y\beta_i
\in X_\beta.
\]
It remains to show that $u_2\in V_2$. Since each $\alpha_j\in \operatorname{Span}\{\hat{\alpha}_1,\dots,\hat{\alpha}_m\}$, we may write
$\alpha_j=\sum_{k=1}^m \langle \alpha_j,\hat{\alpha}_k\rangle_Y \hat{\alpha}_k.$ Using \eqref{equa:orthocod2}, we obtain
\begin{equation*}
\begin{aligned}
\langle \alpha_j,u_2\rangle_Y
&=
\left\langle
\sum_{k=1}^m \langle \alpha_j,\hat{\alpha}_k\rangle_Y \hat{\alpha}_k,
\sum_{i=1}^m \langle \hat{\alpha}_i,u\rangle_Y \beta_i
\right\rangle_Y \\
&=
\sum_{i=1}^m \sum_{k=1}^m
\langle \alpha_j,\hat{\alpha}_k\rangle_Y
\langle \hat{\alpha}_i,u\rangle_Y
\langle \hat{\alpha}_k,\beta_i\rangle_Y \\
&=
\sum_{k=1}^m \langle \alpha_j,\hat{\alpha}_k\rangle_Y \langle \hat{\alpha}_k,u\rangle_Y
=
\langle \alpha_j,u\rangle_Y
\le b_j,
\qquad j=1,\dots,n.
\end{aligned}
\end{equation*}
Thus $u_2\in V_2$, and therefore $u=u_1+u_2\in V_1+V_2$. This proves $\mathcal{P}=V_1+V_2$.

Finally, since $X_\beta$ is finite-dimensional and $V_2$ is a polyhedron in $X_\beta$, the representation \eqref{equa:decomll} follows from the Minkowski--Weyl theorem.
\end{proof}

With Lemma~\ref{lemma:decom_general} in hand, we can construct a CNP for the polyhedron \eqref{equa:polydra_general}. Let $\mathcal{N}(\cdot,\theta)$ be a neural network with trainable parameter set $\theta$, and let $\{\lambda_i\}_{i=1}^p$ and $\{\gamma_i\}_{i=1}^q$ be trainable scalars. We define
\begin{equation}
    U(\cdot,\hat{\theta})
    =
    \mathcal{N}(\cdot,\theta)
    -
    \sum_{i=1}^m \langle \mathcal{N}(\cdot,\theta),\hat{\alpha}_i\rangle_Y \beta_i
    +
    \sum_{i=1}^p g_i(\lambda_1,\dots,\lambda_p)\,y_i
    +
    \sum_{i=1}^q h_i(\gamma_i)\,z_i, \quad \hat{\theta} = \{\theta, \{\lambda_i\},\{\gamma_i\}\}.
\end{equation}

\subsubsection{Polyhedra with Dirichlet boundary conditions}\label{sec:poly_boundary}

Let $X\subset Y$ be two Hilbert spaces. We consider polyhedra coupled with a Dirichlet boundary condition:
\begin{equation}\label{equa:bound_cons}
    \mathcal{P}^b
    =
    \{u\in X\mid u=g \text{ on }\partial\Omega,\ \langle \alpha_i,u\rangle_Y\le b_i,\ \alpha_i\in Y,\ i=1,\dots,n\}.
\end{equation}
Given a function $\hat{g}\in X$ such that $\hat{g}=g$ on $\partial\Omega$, the set $\mathcal{P}^b$ can be rewritten as
\begin{equation}
    \mathcal{P}^b
    =
    \{u+\hat{g}\in X\mid u=0 \text{ on }\partial\Omega,\ \langle \alpha_i,u\rangle_Y\le c_i,\ \alpha_i\in Y,\ i=1,\dots,n\},
\end{equation}
where $c_i:=b_i-\langle \alpha_i,\hat{g}\rangle_Y,$ for $ i=1,\dots,n$.
We consider $X=H^k(\Omega)$ and let $X_0:=H_0^k(\Omega)$; see, e.g., \cite{FournierAdams} for a definition of these Sobolev spaces. It is therefore sufficient to consider the constraint set
\begin{equation}\label{equa:poly_boundary}
    \mathcal{P}^0
    =
    \{u\in X_0\mid \langle \alpha_i,u\rangle_Y\le b_i,\ \alpha_i\in Y,\ i=1,\dots,n\}.
\end{equation}
Since $X_0$ is a Hilbert space under the inherited inner product from $X$, the problem reduces to the general form \eqref{equa:polydra_general}, with $X$ replaced by $X_0$.

Following the construction in \cite{lu2021physics}, we introduce a smooth boundary-lifting function $w$ satisfying
\begin{equation}\label{equa:boundry}
    w(x)=0 \quad \text{on } \partial\Omega,
    \qquad
    w(x)>0 \quad \text{in } \Omega.
\end{equation}
Then any function of the form $w\phi$, with $\phi\in C^\infty(\overline{\Omega})$, automatically satisfies the homogeneous Dirichlet boundary condition. For simple domains such as a ball or a rectangle, the desired function $w$ can be easily constructed. For complex geometries, the construction is usually non-trivial, and we refer to \cite{sukumar2022exact} for an approach based on distance fields.

We now construct a constrained neural parameterization for \eqref{equa:poly_boundary}. Let $\{\hat{\alpha}_i\}_{i=1}^m$, with $m\le n$, be an orthonormal basis of $\operatorname{Span}\{\alpha_1,\dots,\alpha_n\}$ in $Y$. Then, by Lemma~\ref{lemma:orthocond}, there exist $\{\beta_i\}_{i=1}^m\subset X_0$ such that $\langle \hat{\alpha}_i,\beta_j\rangle_Y=\delta_{ij}$, $i,j=1,\dots,m$. Let $\mathcal{N}(\cdot,\theta)$ be a neural network with trainable parameter set $\theta$, and let $\{\lambda_i\}_{i=1}^p$ and $\{\gamma_i\}_{i=1}^q$ be trainable scalars. Let  $X_\beta:=\operatorname{Span}\{\beta_1,\dots,\beta_m\}$ and define the CNP as
\begin{equation}\label{equa:CNPboundpoly}
    U(\cdot,\hat{\theta})
    =
    \mathcal{N}(\cdot,\theta)\,w
    -
    \sum_{i=1}^m \langle \mathcal{N}(\cdot,\theta)w,\hat{\alpha}_i\rangle_Y \beta_i
    +
    \sum_{i=1}^p g_i(\lambda_1,\dots,\lambda_p)\,y_i
    +
    \sum_{i=1}^q h_i(\gamma_i)\,z_i, \quad \hat{\theta} = \{\theta, \{\lambda_i\},\{\gamma_i\}\},
\end{equation}
where the functions $\{g_i\}_{i=1}^p$ and $\{h_i\}_{i=1}^q$ satisfy Condition~\ref{conditions}, and $\{y_i\}_{i=1}^p$ and $\{z_i\}_{i=1}^q$ denote the vertices and extreme rays, respectively, of the pointed polyhedron $\{u\in X_\beta\mid \langle \alpha_i,u\rangle_Y\le b_i,\ i=1,\dots,n\}$.

\subsubsection*{Universal approximation for polyhedra with Dirichlet boundary conditions}

If the family $\{\mathcal{N}(\cdot,\theta)w:\theta\}$ is dense in $X_0$, then the family $\{U(\cdot,\hat{\theta}):\hat{\theta}\}$ defined by \eqref{equa:CNPboundpoly} is dense in $\mathcal{P}^0$. We next provide a sufficient condition ensuring this density property.

\begin{lemma}[Density of \(\{\mathcal N w\}\) in \(H_0^k(\Omega)\)]
\label{lem:density_wN_X0}
Let $\Omega\subset\mathbb{R}^d$ be a smooth bounded domain, and let $w\in C^k(\overline{\Omega})\cap H_0^k(\Omega)$ satisfy \eqref{equa:boundry}. Let the neural network family $\{\mathcal N(\cdot,\theta):\theta\}$ be dense in $H^k(\Omega)$, then the set $\{\mathcal N(\cdot,\theta)\,w:\theta\}$ is dense in \(H_0^k(\Omega)\) with respect to the \(H^k\)-norm.
\end{lemma}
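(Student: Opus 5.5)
The plan is to reduce everything to smooth compactly supported functions. I divide by $w$ where this is harmless, namely away from $\partial\Omega$, and then transfer the density of $\{\mathcal N(\cdot,\theta)\}$ in $H^k(\Omega)$ to $\{\mathcal N(\cdot,\theta)w\}$ using the fact that multiplication by $w$ is a bounded operator on $H^k(\Omega)$.

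\emph{Step 1 (reduction).} By definition, $H_0^k(\Omega)$ is the $H^k$-closure of $C_c^\infty(\Omega)$. Hence, given $u\in H_0^k(\Omega)$ and $\varepsilon>0$, it suffices to find $\varphi\in C_c^\infty(\Omega)$ with $\|u-\varphi\|_{H^k}<\varepsilon/2$ and then approximate $\varphi$ within $\varepsilon/2$ by some $\mathcal N(\cdot,\theta)w$.

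\emph{Step 2 (division by $w$).} Let $K:=\operatorname{supp}\varphi$, which is a compact subset of $\Omega$. Since $w$ is continuous and $w>0$ in $\Omega$ by \eqref{equa:boundry}, we have $\min_K w=:c>0$. By continuity there is an open neighbourhood $K'$ of $K$ with $\overline{K'}\subset\Omega$ and $w\ge c/2$ on $K'$. On $K'$ the function $1/w$ is $C^k$, because $w\in C^k(\overline\Omega)$ and $w$ is bounded away from zero there. Define $\psi:=\varphi/w$ on $K'$ and $\psi:=0$ elsewhere. Then $\psi\in C^k(\overline\Omega)$ with compact support, so $\psi\in H^k(\Omega)$ and $\psi w=\varphi$ identically.

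\emph{Step 3 (multiplier estimate and conclusion).} For $v\in H^k(\Omega)$, the Leibniz rule $\partial^\beta(wv)=\sum_{\gamma\le\beta}\binom{\beta}{\gamma}\partial^{\gamma}w\,\partial^{\beta-\gamma}v$ holds for $|\beta|\le k$. It is valid for $w\in C^k(\overline\Omega)$ and $v\in H^k$, which can be checked by approximating $v$ by smooth functions, available since $\Omega$ is smooth. It gives
\[
\|wv\|_{H^k(\Omega)}\le C_k\|w\|_{C^k(\overline\Omega)}\|v\|_{H^k(\Omega)}.
\]
By the assumed density of $\{\mathcal N(\cdot,\theta)\}$ in $H^k(\Omega)$, choose $\theta$ with
\[
\|\mathcal N(\cdot,\theta)-\psi\|_{H^k}<\frac{\varepsilon}{2C_k\|w\|_{C^k}}.
\]
Then $\|\mathcal N(\cdot,\theta)w-\varphi\|_{H^k}=\|w(\mathcal N(\cdot,\theta)-\psi)\|_{H^k}<\varepsilon/2$.

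Each $\mathcal N(\cdot,\theta)w$ lies in $H_0^k(\Omega)$, as intended in the construction \eqref{equa:CNPboundpoly}. This is because $H_0^k$ is closed, and $w\in H_0^k\cap C^k(\overline\Omega)$ multiplied by a function in $H^k$ (in fact smooth up to the boundary, for standard smooth activations) can be approximated by $w_j v$ with $w_j\in C_c^\infty$, using the same multiplier bound applied to $w-w_j$. If one only uses $C^k$-approximants of $w$, this step needs a little care: it requires approximating $w$ in $H^k$ by $C_c^\infty$ functions while controlling the product with a bounded smooth $v$. That holds when $v\in C^k(\overline\Omega)$, since then $\|(w-w_j)v\|_{H^k}\le C\|v\|_{C^k}\|w-w_j\|_{H^k}$.

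The main obstacle is Step 2, making $\varphi/w$ a legitimate $H^k$ function, since $w$ vanishes on $\partial\Omega$. Working with compactly supported $\varphi$ avoids any Hardy-type inequality near the boundary. The remaining steps are routine applications of the Leibniz rule.
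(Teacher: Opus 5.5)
Your proof is correct and follows essentially the same route as the paper: approximate by $\varphi\in C_c^\infty(\Omega)$, divide by $w$ on a neighbourhood of $\operatorname{supp}\varphi$ where $w$ is bounded below to obtain a preimage in $C^k(\overline\Omega)$, and transfer the density of $\{\mathcal N(\cdot,\theta)\}$ through the multiplier bound $\|w\psi\|_{H^k}\le C\|\psi\|_{H^k}$. The paper packages the first two steps as an intermediate density of $wC^k(\overline\Omega)$ in $H_0^k(\Omega)$, and your extra check that $\mathcal N(\cdot,\theta)w\in H_0^k(\Omega)$, which the paper only asserts for $C^k(\overline\Omega)$ factors, is a correct and welcome addition.
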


\begin{proof}
We first show that \(wC^k(\overline{\Omega})\) is dense in \(H_0^k(\Omega)\). Let \(\phi\in C_c^\infty(\Omega)\). Choose an open set \(U\) such that $\operatorname{supp}(\phi)\Subset U\Subset \Omega$. Since $w\in C^k(\overline{\Omega})$ and $w>0$ in \(\Omega\), there exists $c>0$ such that $w\ge c\text{ on } \overline{U}$, and therefore \(1/w\in C^k(\overline{U})\). Define
\[
v(x):=
\begin{cases}
\phi(x)/w(x), & x\in U,\\
0, & x\in \Omega\setminus U.
\end{cases}
\]
Then $v\in C^k(\overline{\Omega})$ and $wv=\phi  \text{ in } \Omega$. Hence $C_c^\infty(\Omega)\subset wC^k(\overline{\Omega})$.

Next, since $w\in H_0^k(\Omega)$ and $v\in C^k(\overline{\Omega})$, it follows that $wv\in H_0^k(\Omega)$. Therefore, $wC^k(\overline{\Omega})\subset H_0^k(\Omega)$ and
\[
H_0^k(\Omega)
=
\overline{C_c^\infty(\Omega)}^{\|\cdot\|_{H^k}}
\subset
\overline{wC^k(\overline{\Omega})}^{\|\cdot\|_{H^k}}
\subset
H_0^k(\Omega).
\]
Hence $\overline{wC^k(\overline{\Omega})}^{\|\cdot\|_{H^k}}
=H_0^k(\Omega)$.

Finally, let \(\bar u\in H_0^k(\Omega)\) and \(\varepsilon>0\). Since $w\in C^k(\overline{\Omega})$, there exists a constant $C>0$ such that $\|w\psi\|_{H^k(\Omega)}\le C\|\psi\|_{H^k(\Omega)},\forall \psi\in H^k(\Omega).$ By the density just proved, there exists \(\bar v\in C^k(\overline{\Omega})\) such that $\|\bar u-w\bar v\|_{H^k}<\varepsilon/2.$ Since the neural network family is dense in \(H^k(\Omega)\), there exists \(\theta\) such that $\|\bar v-\mathcal N(\cdot,\theta)\|_{H^k(\Omega)}
<
\varepsilon/(2C)$. Thus, $
\|w\bar v-w\mathcal N(\cdot,\theta)\|_{H^k}
\le
C\|\bar v-\mathcal N(\cdot,\theta)\|_{H^k}
<\varepsilon/2. $
Therefore, $\|\bar u-w\mathcal N(\cdot,\theta)\|_{H^k}<\varepsilon.$ This proves the density of $\{\mathcal N(\cdot,\theta)w\}$ in $H_0^k(\Omega)$.
\end{proof}

\section{Pointwise constraints}
\label{sec:pointwise}

In this section, we present constrained neural parameterizations for several
classes of pointwise constraints. We emphasize that approximation properties for pointwise constrained sets in Sobolev spaces are more delicate than those for the polyhedral constraints considered in Section~\ref{sec:polyhedral}. In particular, the density of a smoother class in a Sobolev space does not, in general, imply the density of the corresponding pointwise constrained feasible sets. Such density properties may
depend on the topology of the underlying function space, the specific form of
the pointwise constraint, the regularity of the bound or obstacle, and the
presence of boundary conditions. Density results and counterexamples for closed
convex sets with pointwise constraints in Sobolev spaces were studied in
\cite{hintermuller2015density}. A general framework for density of convex
intersections and its role in regularization, discretization, dualization,
variational inequalities, and finite-element approximations was developed in
\cite{hintermuller2017density}. In the present work, we focus on the
construction of smooth neural parameterizations that satisfy the pointwise
constraints exactly. Whenever density is invoked for such constraints, it is to
be understood under suitable feasible-density and regularity assumptions in the
chosen topology. A detailed analysis of sharp density conditions for pointwise
Sobolev constraints is beyond the scope of this paper.

\subsection{Unilateral pointwise inequality constraints}
\label{sec:bound_const}

Let $X=H^k(\Omega)$ be a Sobolev space defined on a bounded domain
$\Omega\subset\mathbb R^d$, and let $\phi\in X$. We consider the unilateral pointwise inequality constraint
\begin{equation}
    \mathcal P
    =
    \{u\in X\mid u(x)\ge \phi(x)\ \text{a.e. in }\Omega\}.
\end{equation}
Assume that the network $\mathcal N(\cdot,\theta)$ is chosen such that
$\mathcal N(\cdot,\theta)^2\in X$ for every parameter vector $\theta$. We define
\begin{equation}
    U(\cdot,\theta)
    =
    \phi+\mathcal N(\cdot,\theta)^2 .
    \label{equa:CNP_lowbound}
\end{equation}
Then $U(\cdot,\theta)\in\mathcal P$. 

We next consider a unilateral pointwise constraint together with a Dirichlet
boundary condition:
\begin{equation}
    \mathcal P^b
    =
    \{u\in X\mid u\ge\phi \ \text{a.e. in }\Omega,\quad
    u=g \ \text{on }\partial\Omega\}.
\end{equation}
Choose a lifting $\tilde g$ satisfying $\tilde g=g \text{ on }\partial\Omega,$ and $\tilde g\ge\phi\,\text{ a.e. in }\Omega$. Let $w$ satisfy \eqref{equa:boundry}.
We define
\begin{equation}
    U_b(\cdot,\theta)
    =
    \phi+
    \left(
        w\,\mathcal N(\cdot,\theta)
        +
        (\tilde g-\phi)^{1/2}
    \right)^2 .
    \label{equa:CNP_lowbound_boundary}
\end{equation}
Under suitable regularity assumptions, one has $U_b(\cdot,\theta)\in\mathcal P^b$ for every parameter vector $\theta$. In the special case $g=\phi|_{\partial\Omega}$, a simpler construction is $U_b(\cdot,\theta)
    =
    \phi+w\,\mathcal N(\cdot,\theta)^2;$ see also \cite{song2026single}.

\subsection{Bilateral pointwise constraints}

Let $\phi_1,\phi_2\in X$, with $X$ as above, satisfy $\phi_1\le \phi_2$, and consider the bilaterally constrained set
\begin{equation}
    \mathcal{P}
    =
    \{u\in X\mid \phi_1\le u\le \phi_2 \text{ a.e. in } \Omega\}.
\end{equation}
A CNP for this constraint set was proposed in \cite{song2026single}:
\begin{equation}
    U(\cdot,\theta)
    =
    \phi_2-\operatorname{ReLU}\bigl(\phi_2-[\operatorname{ReLU}(\mathcal{N}(\cdot,\theta)-\phi_1)+\phi_1]\bigr),
\end{equation}
where $\operatorname{ReLU}(\phi)(x)=\max\{0,\phi(x)\}$, $x\in\Omega$, for a given function $\phi\in X$. This construction provides a shortcut form of the clipping operator. 
The inner ReLU enforces the lower bound \(\phi_1\), while the outer ReLU enforces the upper bound \(\phi_2\). Equivalently, $
U(\cdot,\theta)
=
\min\left\{
\max\left\{\mathcal{N}(\cdot,\theta),\phi_1\right\},
\phi_2
\right\}$. However, ReLU-based constructions provide only limited regularity. This is undesirable in problems where the objective functional or the constraints involve high-order derivatives of $u$. We therefore propose the following smooth alternative:
\begin{equation}\label{mh.bilateral}
    U(\cdot,\theta)
    =
    \phi_1+\sin^2(\mathcal{N}(\cdot,\theta))(\phi_2-\phi_1).
\end{equation}
We also consider the case with an additional Dirichlet boundary condition
\begin{equation}
    \mathcal{P}^b
    =
    \{u\in X\mid \phi_1\le u\le \phi_2 \text{ a.e. in } \Omega,\ u=g \text{ on }\partial\Omega\},
    \qquad
    \phi_1\le g\le \phi_2 \text{ on } \partial\Omega.
\end{equation}
Using the same boundary-lifting function $w$ as in the previous section, we define
\begin{equation}
    U_b(\cdot,\theta)
    =
    \phi_1+\sin^2(\mathcal{N}(\cdot,\theta)w+\hat g)(\phi_2-\phi_1),
\end{equation}
where $\hat g$ is chosen such that
%\begin{equation}
    $\sin^2(\hat g)(\phi_2-\phi_1)=g-\phi_1
    \text{ on } \partial\Omega$.
%\end{equation}
Then the bilateral constraint is automatically enforced, and since $w=0$ on $\partial\Omega$, the boundary condition is satisfied as well. For instance, one may take $\hat g\equiv 0$ if $g=\phi_1$ on $\partial\Omega$, or $\hat g\equiv \pi/2$ if $g=\phi_2$ on $\partial\Omega$.

\subsection{Constraints on multiple phases}

Many multiphysics problems involve variables representing volume fractions or phase concentrations, which are required to satisfy both nonnegativity and a summation constraint. Consider the set
\begin{equation}
    \mathcal{P}
    =
    \bigg\{
    u=(u_1,\dots,u_m)\in X^m
    \,\bigg|\,
    \sum_{i=1}^m u_i(x)=1,\quad
    u_i(x)\ge 0 \text{ a.e. in } \Omega,\ i=1,\dots,m
    \bigg\},
\end{equation}
which is sometimes referred to as the Gibbs simplex.
To construct a CNP for this constraint set, we introduce $m-1$ neural networks $\mathcal{N}(\cdot,\theta_i)$, $i=1,\dots,m-1$, and define
\begin{equation}
\begin{cases}
     u_1 = \sin^2(\mathcal{N}(\cdot,\theta_1)),\\[4pt]
     u_j = \left(1-\sum_{i=1}^{j-1}u_i\right)\sin^2(\mathcal{N}(\cdot,\theta_j)),
     \quad j=2,\dots,m-1,\\[4pt]
     u_m = 1-\sum_{i=1}^{m-1}u_i.
\end{cases}
\end{equation}

In the presence of additional homogeneous Dirichlet boundary conditions yielding
\begin{equation}
    \widetilde{\mathcal{P}}
    =
    \bigg\{
    u=(u_1,\dots,u_m)\in X^m
    \,\bigg|\,
    \sum_{i=1}^m u_i(x)\le 1,\quad
    u_i(x)\ge 0 \text{ a.e. in } \Omega,\quad
    u_i=0 \text{ on }\partial\Omega,\ i=1,\dots,m
    \bigg\},
\end{equation}
we introduce $m$ neural networks $\mathcal{N}(\cdot,\theta_i)$, $i=1,\dots,m$, and define
\begin{equation}
\begin{cases}
     u_1 = \sin^2(\mathcal{N}(\cdot,\theta_1)w(x)),\\[4pt]
     u_j = \left(1-\sum_{i=1}^{j-1}u_i\right)\sin^2(\mathcal{N}(\cdot,\theta_j)w(x)),
     \quad j=2,\dots,m.
\end{cases}
\end{equation}

\section{PDE-constrained optimization}
\label{sec:PDE}

Now we turn to optimization problems with PDE constraints. 
In Subsection~\ref{sec:pde_reduced}, we introduce a constrained neural parameterization for a class of PDEs admitting a separable structure, which leads to an exact reduced neural method. By combining this construction with the CNP schemes developed in the previous sections, one can handle PDE-constrained optimization problems with additional state constraints in a natural way. Moreover, in Subsection~\ref{sec:singularity}, we propose a singularity-enriched exact reduced neural method for Poisson problems with low-regularity states in polygonal domains.

\subsection{Exact reduced neural method}\label{sec:pde_reduced}

Let $\Omega\subset\mathbb{R}^d$ be a bounded domain with smooth boundary $\partial\Omega$. Let $Y$ and $U$ denote the state and control spaces, respectively, and consider the optimal control problem
\begin{align*}
    &\text{minimize}\quad J(y,u)\qquad\text{over } y\in Y, u\in U,\\
%\end{equation}
    &\text{subject to}\quad
%subject to the PDE constraint
%\begin{equation}
    \mathbf{E}(y,u)=0,\\
    %\quad
    &\phantom{\text{subject to}}\quad y\in Y_{ad},
\end{align*}
where $y$ and $u$ denote the state and control, respectively, $J:Y\times U\to\mathbb{R}$ is the objective functional, $\mathbf{E}:Y\times U\to Z$ is the underlying PDE with some Banach space $Z$, and $Y_{ad}\subset Y$ is a non-empty closed convex set. 

Most existing neural network approaches introduce two networks, say $y(\cdot,\theta_y)$ and $u(\cdot,\theta_u)$, and enforce the PDE constraint by penalizing the PDE residual $\bigl\|\mathbf{E}\bigl(y(\cdot,\theta_y),u(\cdot,\theta_u)\bigr)\bigr\|_{L^2(\Omega)}$, assuming sufficient regularity to justify the $L^2(\Omega)$-norm. In contrast, for PDEs admitting a specific separable structure, we show that only one neural network is needed and that the PDE constraint can be built into the parameterization exactly. 

Assume that the PDE constraint allows one to isolate $u$ as a function of $y$, i.e., there exists an operator $\mathbf{G}:Y\to U$ such that $u=\mathbf{G}(y)$ for all $y\in Y_{ad}$.
%\begin{equation}
%    u=\mathcal{L}(y),
%    \qquad
%    y\in Y_{ad},
%\end{equation}
%where $\mathcal{L}$ is a differential operator or an operator that can be evaluated directly under a regular assumption on the state $y$. 
The original optimal control problem then reduces to
\begin{align*}
    &\text{minimize}\quad J\bigl(y,\mathbf{G}(y)\bigr)\qquad \text{over } y\in Y,\\
    &\text{subject to}\quad y\in Y_{ad}. 
\end{align*}
Suppose further that a constrained neural parameterization is available for the admissible state set $Y_{ad}$. As a simple example, consider the optimal control problem with a second-order linear elliptic PDE constraint and a pointwise state constraint:
\begin{equation}
    -\Delta y=f+u
    \quad \text{in }\Omega,
    \qquad
    y\in H^1_0(\Omega), u\in L^2(\Omega)
    ,
    \qquad
    y\le 1
    \quad \text{in }\Omega,
\end{equation}
and $f\in L^2(\Omega)$ given. Then we have $u=\mathbf{G}(y)=-\Delta y - f$,
%\[
%u=\mathcal{L}(y):=-\Delta y-f 
%\]
and the admissible state set is $Y_{ad}=\{y\in H_0^1(\Omega)\mid y\le 1 \text{ in }\Omega\}$. Using the CNP scheme from Subsection~\ref{sec:bound_const} and assuming that the optimal state satisfies the regularity assumption $y^*\in H^2(\Omega)\cap H_0^1(\Omega)$, we parameterize the state and control by
\begin{equation*}
    y(\cdot,\theta)=1-\bigl(\mathcal N(\cdot,\theta)w+1\bigr)^2,
    \qquad
    u(\cdot,\theta)=\mathbf{G}(y(\cdot,\theta))=-\Delta y(\cdot,\theta)-f,
\end{equation*}
where $\mathcal N(\cdot,\theta)$ denotes a neural network with parameter vector $\theta\in \mathbb{R}^M$, and $w$ is the boundary-lifting function introduced in \eqref{equa:boundry}. The  discrete training problem is then reduced to minimizing
\begin{equation}
    \text{minimize}\quad\mathrm{Loss}(\theta):=J\bigl(y(\cdot,\theta),\mathbf{G}(y(\cdot,\theta))\bigr)\qquad\text{over}\quad\theta \in\mathbb{R}^M.
\end{equation}
Note that the required derivatives of $y(\cdot,\theta)$ can be evaluated by automatic differentiation. The PDE constraint is enforced exactly at every collocation or evaluation point, rather than approximately through a residual penalty. Moreover, only one network is introduced, which may reduce computational cost and memory usage compared with approaches that require separate networks for the state and control, and possibly additional networks for adjoint variables or multipliers. Another important feature of the exact reduced neural method is that it avoids the explicit approximation of the adjoint state and Lagrange multipliers. In PDE-constrained optimization problems with pointwise state constraints, the associated multipliers may exhibit very low regularity or even belong to spaces of measures rather than $L^p$  \cite{hintermuller2006feasible,troltzsch2010optimal}, which makes them difficult to approximate accurately by neural networks. By optimizing directly over parameterized admissible states and computing the control via the relation $u=\mathbf{G}(y)$, the proposed formulation bypasses this difficulty. 

\begin{remark} We realize that due to more general applicability eliminating the state $y$ from the unknowns via an application of the implicit function theorem such that $y(u)=\mathbf{S}(u)$, with $\mathbf{S}:U\to Y$, such that $\mathbf{E}(y(u),u)=0$ is perhaps more common than eliminating the control. However, in such a setting the application of the CNP principle to ensure the state constraint becomes problematic, in general.
\end{remark}

The above approach can be readily carried over to PDE systems yielding
%More generally, consider a system with controls $u_1,\dots,u_m$ and states $y_1,\dots,y_n$. Assume that the PDE constraints and state constraints can be reformulated as
\begin{equation}\label{equa:multi-pde}
    \begin{cases}
        u_i=\mathbf{G}_i(y_1,\dots,y_n), & i=1,\dots,m,\\
        y_i\in Y_{ad}^i, & i=1,\dots,n,
    \end{cases}
\end{equation}
with $\mathbf{G}_i:Y_1,\ldots\times Y_n\to U_i$ and Banach spaces $Y_i, U_j$, and nonempty, closed, convex $Y_{ad}^i\subset Y_i$ for $i=1,\ldots,n$ and $j=1,\ldots,m$. If CNP schemes
%\[
$y_i(\cdot,\hat\theta_{y_i})\in Y_{ad}^i$,
%\qquad
$i=1,\dots,n$,
%\]
are available, then under suitable regularity assumptions on the optimal states, analogous to the one above, the constrained minimization problem
\begin{equation}
    \min J(y_1,\dots,y_n,u_1,\dots,u_m)
    \qquad
    \text{subject to \eqref{equa:multi-pde}}
\end{equation}
can be solved by considering the unconstrained optimization of
\begin{equation}
    \mathrm{Loss}(\hat\theta_{y_1},\dots,\hat\theta_{y_n})
    =
    J\bigl(
    y_1(\cdot,\hat\theta_{y_1}),
    \dots,
    y_n(\cdot,\hat\theta_{y_n}),
    \mathbf{G}_1^\theta,
    \dots,
    \mathbf{G}_m^\theta
    \bigr),
\end{equation}
where
%\[
$\mathbf{G}_i^\theta
:=
\mathbf{G}_i\bigl(
y_1(\cdot,\hat\theta_{y_1}),
\dots,
y_n(\cdot,\hat\theta_{y_n})
\bigr)$,
$i=1,\dots,m$
.

For notational simplicity, we discuss the approximation property only for a single
state-control pair; the extension to multiple states and controls is analogous.
Assume that, for some integer $k$, the reduced operator satisfies
$\mathbf{G}:Y_{ad}\cap H^k(\Omega)\to U$ and is continuous when
$Y_{ad}\cap H^k(\Omega)$ is endowed with the $H^k(\Omega)$-topology. That is, for
any sequence $\{y_n\}_{n\in\mathbb N}\subset Y_{ad}\cap H^k(\Omega)$,
$y_n\to y$ in $H^k(\Omega)$ implies $\mathbf{G}(y_n)\to \mathbf{G}(y)$ in $U$.

Assume further that the corresponding CNP ansatz family is dense in
$Y_{ad}\cap H^k(\Omega)$ with respect to the $H^k(\Omega)$-norm; namely, for
every $y\in Y_{ad}\cap H^k(\Omega)$, there exists a sequence of parameters
$\{\hat\theta_n\}_{n\in\mathbb N}$ such that
$y(\cdot,\hat\theta_n)\to y$ in $H^k(\Omega)$. Then, by the continuity of
$\mathbf{G}$, the associated reconstructed controls
$u(\cdot,\hat\theta_n):=\mathbf{G}(y(\cdot,\hat\theta_n))$ satisfy
$u(\cdot,\hat\theta_n)\to \mathbf{G}(y)=u$ in $U$. Thus the parameterized pairs
$\bigl(y(\cdot,\hat\theta_n),u(\cdot,\hat\theta_n)\bigr)$ approximate admissible
state-control pairs in the topology $H^k(\Omega)\times U$. In particular, any
optimal pair $(y^\ast,u^\ast)$ satisfying the above regularity and the reduced
relation $u^\ast=\mathbf{G}(y^\ast)$ can be approximated in this sense.

The above assumptions are nontrivial. For example, for the Poisson equation
$-\Delta y=f+u$ in $\Omega$, $y=0$ on $\partial\Omega$, with $U=L^2(\Omega)$, the
reduced operator is $\mathbf{G}(y)=-\Delta y-f$. Then
$y\in H^2(\Omega)\cap H^1_0(\Omega)$ and $f\in L^2(\Omega)$ are sufficient to
ensure $\mathbf{G}(y)\in L^2(\Omega)$ and the continuity of
$\mathbf{G}:H^2(\Omega)\cap H^1_0(\Omega)\to L^2(\Omega)$. In contrast, for a
weakly feasible pair $(y,u)\in H^1_0(\Omega)\times L^2(\Omega)$, the state $y$
need not belong to $H^2(\Omega)$, for instance when elliptic $H^2$-regularity
fails on nonsmooth or nonconvex domains~\cite{dauge2006elliptic}. Hence one cannot generally expect a
sequence $\{y_n\}_{n\in\mathbb N}\subset H^2(\Omega)\cap H^1_0(\Omega)$ such
that $y_n\to y$ in $H^1_0(\Omega)$ and $-\Delta y_n-f\to u$ in $L^2(\Omega)$.

In the next subsection, we explore the use of singularity-enriched neural methods for optimal control problems governed by the Poisson equation in polygonal domains with geometric singularities, where the regularity assumption on the optimal state cannot be guaranteed.

\subsection{Singularity-enriched method for Poisson problems in polygonal domains}\label{sec:singularity}

Given a polygonal domain $\Omega\subset\mathbb{R}^2$ with a re-entrant corner, $f\in L^2(\Omega)$ and $\alpha>0$, we consider 
\begin{align*}
    &\text{minimize}\quad J(y,u)=\frac{1}{2}\|y-y_d\|_{L^2(\Omega)}^2 + \frac{\alpha}{2}\|u\| _{L^2(\Omega)}^2\qquad \text{over }(y,u)\in H^1_0(\Omega)\times L^2(\Omega),\\
&\text{subject to}\quad -\Delta y = u+f  \text{ in } \Omega, 
\qquad y = 0  \text{ on } \partial \Omega.
\end{align*}
It is known that re-entrant corners induce corner singularities in the solution,
so that, in general, $y\notin H^2(\Omega)$~\cite{dauge2006elliptic}. As a consequence, a direct application of strong-form neural approaches, such as PINNs, makes it difficult to capture the singular component of the solution. To address this issue, inspired by the singularity-enriched PINNs\cite{hu2024solving}, we exploit the analytical structure of the solution and decompose it as
%\begin{equation}
$y = \mathcal{R} + \mathcal{S}$,
%\end{equation}
where $\mathcal{R} \in H^2(\Omega)$ is a smooth regular component and $\mathcal{S}$ captures the singular behavior near corners.

For Dirichlet boundary conditions, as above, our choice of singular functions takes the form
%\begin{equation}
$s_j(r_j,\theta_j) = r_j^{\lambda_j} \sin(\lambda_j \theta_j)$, 
%\qquad 
$\lambda_j = \frac{\pi}{\omega_j}$,
%\end{equation}
where $\omega_j$ is the interior angle at vertex $v_j$, $j=1,\ldots, M$, and $(r_j,\theta_j)$ are local polar coordinates. The singular part $\mathcal{S}$ is then constructed as
\begin{equation}
\mathcal{S} = \sum_{j=1}^M \gamma_j \, \eta_\rho(r_j)\, s_j(r_j,\theta_j),
\end{equation}
where $\eta_\rho\in C^2([0,+\infty))$, defined for $\rho \in(0,2]$, is a smooth cutoff function ensuring localization, and $\{\gamma_j\}$ are unknown coefficients. A typical choice of $\eta_\rho$ is 
\begin{equation}\label{equa:rho}
\eta_\rho(r)=
\begin{cases}
1, & 0<r<\frac{\rho L}{2},\\
\frac{15}{16}\bigg(\frac{8}{15}-\big(\frac{4r}{\rho L}-3\big)+\frac{2}{3}\big(\frac{4r}{\rho L}-3\big)^3-\frac{1}{5}\big(\frac{4r}{\rho L}-3\big)^5\bigg) ,
& \frac{\rho L}{2}\le r\le \rho L,\\
0, & \rho L\le r,
\end{cases}
\end{equation}
where $L\in \mathbb{R}_{+}$ is a fixed small number so that $\eta_\rho(r_j)s_j(r_j,\theta_j)$ preserves the zero boundary condition on $\partial \Omega$.

Substituting the decomposition into the PDE yields a modified equation for the regular part $\mathcal{R}$:
\begin{equation}
-\Delta \mathcal{R} -\sum_{j=1}^M \gamma_j \Delta(\eta_\rho s_j)= u+f
\quad \text{in } \Omega,
\qquad \mathcal{R} = 0 \quad \text{on } \partial \Omega.
\end{equation}
Since $s_j$ is harmonic away from the corner singularity, one has
%\begin{equation}
    $\Delta(\eta_\rho s_j)=\Delta\big((\eta_\rho-1) s_j \big) + \Delta s_j = \Delta\big((\eta_\rho-1) s_j \big)$
%\end{equation}
where $(\eta_\rho-1) s_j\in C^2(\overline{\Omega
})$ by construction. Thus, $\Delta(\eta_\rho s_j)$ can be explicitly evaluated. We now introduce a neural network $\mathcal{N}(\cdot,\theta_R)$ and approximate $\mathcal{R}$ using $\mathcal{R}(\cdot,\theta_R):=\mathcal{N}(\cdot,\theta_R)w$ with $w$ satisfying \eqref{equa:boundry}, and treat the coefficients $\{\gamma_j\}$ as trainable parameters. The parameters are obtained by minimizing the loss function
\begin{equation}
    \text{Loss}(\theta_R,\{\gamma_j\}):= J\big(\mathcal{R}(\cdot,\theta_R) +\sum_{j=1}^M \gamma_j (\eta_\rho s_j), -\Delta \mathcal{R}(\cdot,\theta_R)-  \sum_{j=1}^M \gamma_j \Delta(\eta_\rho s_j)-f\big).
\end{equation}
Once the training is completed, we approximate the optimal state and control as
\begin{equation}
    y_\theta=\mathcal{R}(\cdot,\theta_R) +\sum_{j=1}^M \gamma_j ( \eta_\rho s_j),\quad u_\theta=-\Delta \mathcal{R}(\cdot,\theta_R)-  \sum_{j=1}^M \gamma_j \Delta(\eta_\rho s_j)-f.
\end{equation}
It is straightforward to verify that $(y_\theta,u_\theta)$ satisfies the PDE constraint exactly in the weak sense. Such singularity-enriched PINNs have also been developed for mixed boundary conditions and 3D Poisson problems in \cite{hu2024solving}, and the corresponding methods for optimal control problems can be similarly extended.

\section{Numerical experiments}
\label{sec:numerical}

In this section, we present numerical experiments to assess the performance of the proposed constrained neural parameterization (CNP) schemes on several representative problems, including high-dimensional PDE-constrained optimization, state-constrained optimal control, and a PDE-based hybrid inverse problem.

For all experiments we employ an $N$-block ResNet architecture. We use $N=2$ blocks for the one-dimensional problem and $N=3$ blocks for the remaining examples. Each block consists of two fully connected layers with a residual connection. The hidden width is set to 64 for all examples except the 10-dimensional example, where we use width 128. All neural networks use the `tanh' activation function and are trained using the Adam optimizer. Additional implementation details are provided in the corresponding examples. The experiments were implemented in PyTorch and executed on a MacBook Pro equipped with an Apple M5 chip and 24 GB unified memory. GPU acceleration was enabled through the Metal Performance Shaders (MPS) backend. The implementation of the proposed methods is publicly available at \url{https://github.com/JianfengNing/CNP_Code}.

\subsection{Optimization with two integral constraints}\label{subsec:twointegral}

In this example we consider the domain $\Omega=(0,1)$ and study the constrained optimization problem
\begin{align*}
    &\text{minimize}\quad J(u)=
    \|u-u_d\|_{L^2(\Omega)}^2 
    + \alpha\|u'\|_{L^2(\Omega)}^{2}, \qquad \text{over } u\in H^1(\Omega),\\
%\end{equation}
%\begin{equation}
  &\text{subject to}\quad   u\in \mathcal{K}:=
    \left\{
    u\in H^1(\Omega)  
    \ \middle|\ 
    \int_\Omega u(x)dx\le c_1,\;
    \int_\Omega x u(x)dx\le c_2
    \right\}.
\end{align*}
The optimal solution $u^*$, the target function $u_d$, and the parameters $\alpha,c_1,c_2$ are chosen as
\begin{equation}
\begin{cases}
u^* = \cos(\pi x) + \cos(6\pi x) + 2,\quad
u_d = -\alpha u^*_{xx} + u^* + 0.25x, \\
\alpha=0.01, 
\quad 
c_1 = 3,
\quad 
c_2 = 1-\frac{2}{\pi^2}.
\end{cases} 
\end{equation}
Following the construction in Section~\ref{sec:alpha_i}, we fix
\begin{equation}
\begin{cases}
\hat{\alpha}_1=\beta_1\equiv 1, 
\quad 
\hat{\alpha}_2=\beta_2=0.5-x,\quad z_1 = -1+2x,
\quad 
z_2=2-3x,\\
y_1 = a +bx, \text{ where $a,b$ solve }
\int_\Omega (a+bx)dx=c_1,
\int_\Omega x(a+bx)dx=c_2.
\end{cases}
\end{equation}
Introducing a network $\mathcal{N}(\cdot,\theta)$ and trainable parameters $\{\gamma_i\}_{i=1}^2$, we construct the CNP as:
\begin{equation}
     u(\cdot,\hat{\theta})
     =
     \mathcal{N}(\cdot,\theta)
     -\sum_{i=1}^2 
     \langle \mathcal{N}(\cdot,\theta),\hat{\alpha}_i\rangle_{L^2(\Omega)}\beta_i
     + y_1
     + \sum_{i=1}^2 \gamma_i^2 z_i,
     \qquad 
     \hat{\theta} = \{\theta,\gamma_1,\gamma_2\}.
\end{equation}
The loss function then reads:
\begin{equation}
    \text{Loss}(\hat{\theta})
    =
    \|u(\cdot,\hat{\theta})-u_d\|_{L^2(\Omega)}^2
    +\alpha\|u'(\cdot,\hat{\theta})\|_{L^2(\Omega)}^{2}.
\end{equation}
As a comparison, we consider a neural network-based penalty method where the solution is parameterized as $u(\cdot,\theta)=\mathcal{N}(\cdot,\theta),$ and the corresponding discrete loss function reads
\begin{equation}
\begin{split}
     \text{Loss}(\hat{\theta})
    =&
    \|u(\cdot,\theta)-u_d\|_{L^2(\Omega)}^2
    +\alpha\|u'(\cdot,\theta)\|_{L^2(\Omega)}^{2} 
    \\
    &+ \beta \bigg(\text{ReLU} \big(\int_\Omega u(x,\theta)dx-c_1\big)^2 + \text{ReLU} \big(\int_\Omega xu(x,\theta)dx-c_2\big)^2 \bigg).
\end{split}
\end{equation}
The specific choices of $\beta$ are reported below with the experiments.
For this one-dimensional problem, Monte Carlo integration is unnecessary. Instead, at each iteration we employ equispaced points $\{x_i\}_{i=1}^{1000+N}$ in $[0,1]$, where $N$ is randomly drawn from a uniform distribution of $\{0,...,200\}$. The stochastic component helps reduce the risk that the integrand appears to be minimized at a fixed set of nodes while the overall functional remains far from its minimum\cite{yu2018deep}. 

For both methods we train for 5000 epochs. The learning rates are initialized to $10^{-3}$ for the network parameters and $10^{-2}$ for $\{\gamma_i\}_{i=1}^2$, and are reduced by a factor of $0.8$ every 200 epochs. The training loss and error histories of the CNP method and the penalty method with different penalty parameters are shown in Fig.~\ref{fig:training_hist}, and the corresponding relative $L^2$ errors are reported in Table~\ref{tab:error_comparison}. The results show that the proposed CNP method achieves significantly higher accuracy for both the solution and its derivative, while also exhibiting faster convergence compared with the penalty method across different choices of the penalty parameter.
\begin{figure}[htbp]
     \centering
    \begin{subfigure}[b]{0.3\textwidth}
        \centering
        \includegraphics[width=\textwidth]{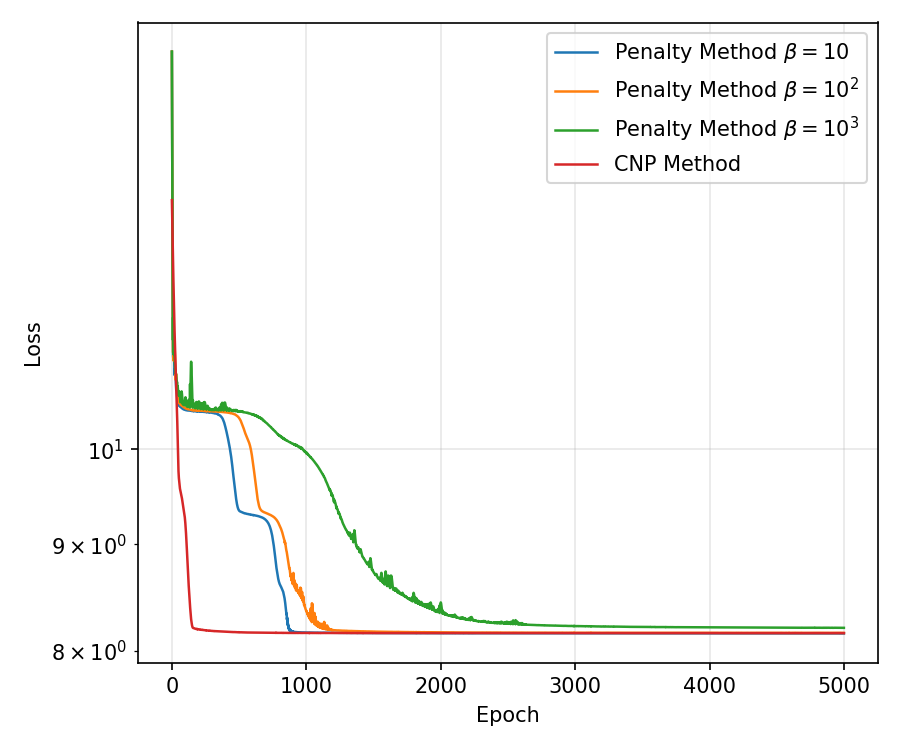}
        \caption{Loss histories.}
    \end{subfigure}
   \hspace{1.3cm}
    \begin{subfigure}[b]{0.3\textwidth}
        \centering
        \includegraphics[width=\textwidth]{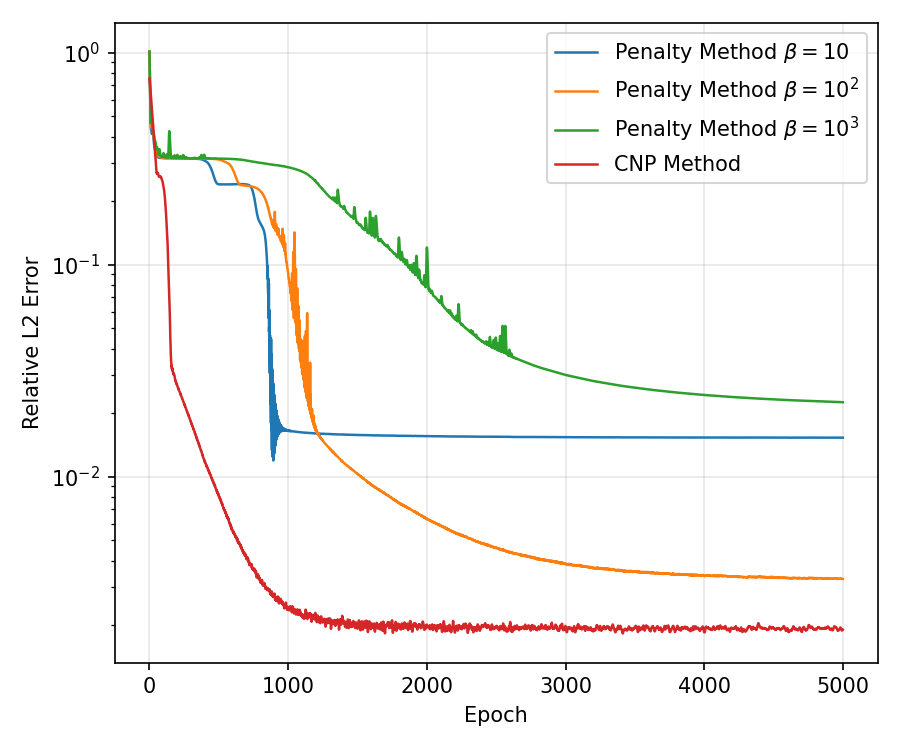}
        \caption{Error histories.}
    \end{subfigure}
    \caption{Numerical results for Example~\ref{subsec:twointegral}. Loss histories and error histories during the training process.}
    \label{fig:training_hist}
\end{figure}
\begin{table}[htbp]
\centering
\begin{tabular}{ccccc}
\toprule
Methods & CNP Method & \makecell{Penalty Method \\ ($\beta=10$)} & \makecell{Penalty Method \\ ($\beta=10^2$)} & \makecell{Penalty Method \\ ($\beta=10^3$)} \\
\midrule
Relative $L^2$ error ($u$) & $1.90 \times 10^{-3}$ & $1.52 \times 10^{-2}$ & $3.29 \times 10^{-3}$ & $2.24 \times 10^{-2}$ \\
Relative $L^2$ error ($u'$) & $6.27 \times 10^{-3}$ & $3.29 \times 10^{-2}$ & $3.80 \times 10^{-2}$ & $1.59 \times 10^{-1}$ \\
\bottomrule
\end{tabular}
\caption{Numerical results for Example~\ref{subsec:twointegral}. Relative $L^2$ errors of the numerical solutions and derivatives.}
\label{tab:error_comparison}
\end{table}

\subsection{A state-constrained optimal control problem in high-dimensional space}
\label{subsec:4D}

In this example we consider the optimal control problem
\begin{align*}
&\text{minimize}\quad J(y,u)
=
\frac{1}{2}\|y-y_d\|^2_{L^2(\Omega)}
+
\frac{\alpha}{2}\|u\|^2_{L^2(\Omega)},
\qquad \text{over }
(y,u)\in H^1_0(\Omega)\times L^2(\Omega),\\
%\end{equation}
%subject to the following PDE constraint with an additional state integral constraint:
%\begin{equation}
&\text{subject to}\quad -\Delta y = u \quad \text{in }\Omega,
\qquad
y=0 \text{ on }\partial\Omega,
\qquad
\int_{\Omega_1} y(x)\,dx \le c .
\end{align*}
We take $\Omega=(-1,1)^{10}, \Omega_1 = (-1,0)\times(-1,1)^9$, and choose the optimal state and control as $y^*(x)
=
\prod_{i=1}^{10} \cos\big(\frac{\pi x_i}{2}\big)$ and $u^*(x)=-\Delta y^*$. We set $c=\frac{1}{2}\left(\frac{4}{\pi}\right)^{10}$, $y_d=-\alpha \Delta u^*+y^*+\frac{1}{2}\chi_{\Omega_1}$. The additional constant term ensures that the state constraint is active and corresponds to a positive Lagrange multiplier in the optimality system.

To construct a constrained neural representation of the state $y$, we introduce a neural network $\mathcal N(\cdot,\theta_y)$, a trainable parameter $\gamma\in\mathbb R$, and the functions $\alpha_1\equiv 1$, $w=\prod_{i=1}^{10}(1- x_i^2)$. Following the CNP construction in Equation~\ref{equa:CNPboundpoly}, we let $\hat{\alpha}_1\equiv 1$ and $\beta_1 = 2(\frac{3}{4})^{10}w$, and define
\begin{equation}
\label{equa:CNP_4d}
y(\cdot,\hat{\theta}_y)
=
\mathcal N(\cdot,\theta_y)w
-
\langle \mathcal N(\cdot,\theta_y)w,\hat\alpha_1\rangle_{L^2(\Omega_1)}\,\beta_1
+
(c-\gamma^2)\beta_1,
\qquad
\hat{\theta}_y=\{\theta_y,\gamma\}.
\end{equation}
The corresponding loss function is
\begin{equation}
\label{equa:4d_loss}
\text{Loss}(\hat{\theta}_y)
=
J\big(y(\cdot,\hat{\theta}_y),-\Delta y(\cdot,\hat{\theta}_y)\big).
\end{equation}
For comparison, we also consider a neural network method based on the KKT system
\begin{equation}
\begin{cases}
-\Delta y = u \quad \text{in }\Omega,
\qquad y=0 \text{ on }\partial\Omega,\\[4pt]
-\Delta p = y-y_d + \mu\chi_{\Omega_1}
\quad \text{in }\Omega,
\qquad p=0 \text{ on }\partial\Omega,\\[4pt]
\alpha u + p = 0 \quad \text{in }\Omega,\\[4pt]
\mu \ge 0,\qquad
\int_{\Omega_1} y(x)dx \le c,\qquad
\mu\!\left(\int_{\Omega_1} y(x)dx-c\right)=0 .
\end{cases}
\end{equation}
In addition to the parameterization $y(\cdot,\hat{\theta}_y)$ in \eqref{equa:CNP_4d}, we introduce a neural network $\mathcal N(\cdot,\theta_p)$ and a trainable parameter $\hat{\mu}$, and set $p(\cdot,\theta_p)=\mathcal N(\cdot,\theta_p)w$. Following \cite{dai2025solving}, we use the relation $\alpha u=-p$ to eliminate $u$ and observe that $\int_{\Omega_1}y(x)dx=c-\gamma^2$, and we obtain the loss with $\Theta=\{\hat{\theta}_y,\theta_p,\hat{\mu}\}$:
\begin{equation}
\text{Loss}(\Theta)
=
\|\Delta y(\cdot,\hat{\theta}_y)-\alpha^{-1}p(\cdot,\theta_p)\|^2_{L^2(\Omega)}
+
\|\Delta p(\cdot,\theta_p)+y(\cdot,\hat{\theta}_y)-y_d+\hat{\mu}^2\chi_{\Omega_1}\|^2_{L^2(\Omega)}
+
(\hat{\mu}\gamma)^2 .
\end{equation}
For both methods we train the networks for 10000 epochs. The learning rates are initialized as $10^{-3}$ for the network weights and $10^{-2}$ for the parameter $\gamma$, and reduced by a factor of $0.8$ every 200 epochs. In each epoch, $2\times10^4$ points are sampled uniformly from $\Omega$ to approximate the integrals in the loss functions. The numerical optimal states and controls on the slice $\{(x_1,x_2,0,...,0)\}$
are shown in Fig.~\ref{fig:PDE_figure_4d}. Both methods provide accurate approximations in this high-dimensional setting, while the exact reduced neural method yields smaller relative $L^2$ and absolute errors at the slice for both state and control. Moreover, since the KKT-based approach requires training of two neural networks, it incurs higher computational cost and memory usage per epoch. We have also tested enforcing the integral constraint using a penalty method instead of the CNP representation \eqref{equa:CNP_4d} for both methods. In that case the numerical solutions exhibit larger errors, and therefore these results are omitted.
\begin{figure}[htbp]
    \centering
    \begin{subfigure}[b]{0.85\textwidth}
        \centering
        \includegraphics[width=\textwidth]{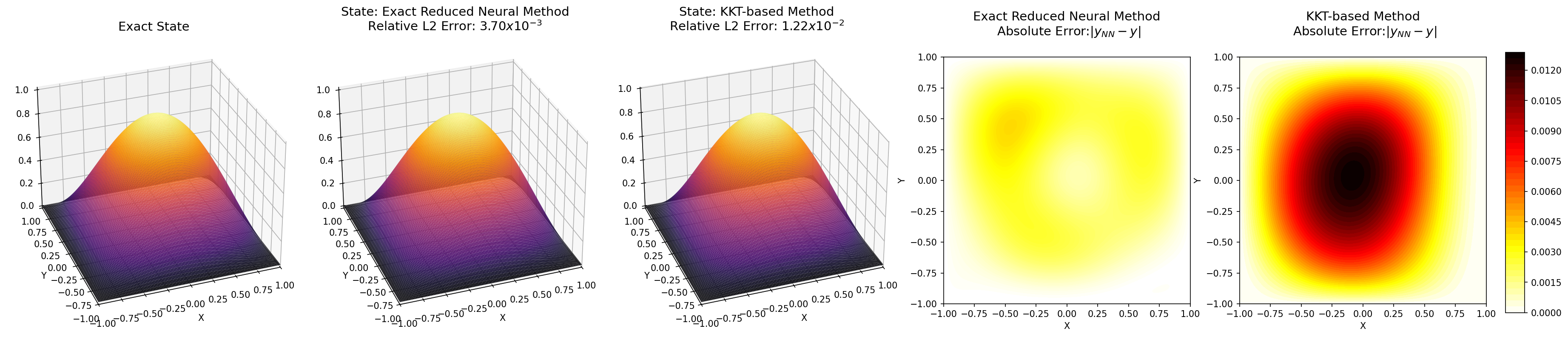}
        \label{fig:PDE_state_figure_4d}
    \end{subfigure}
   \begin{subfigure}[b]{0.85\textwidth}
        \centering
        \includegraphics[width=\textwidth]{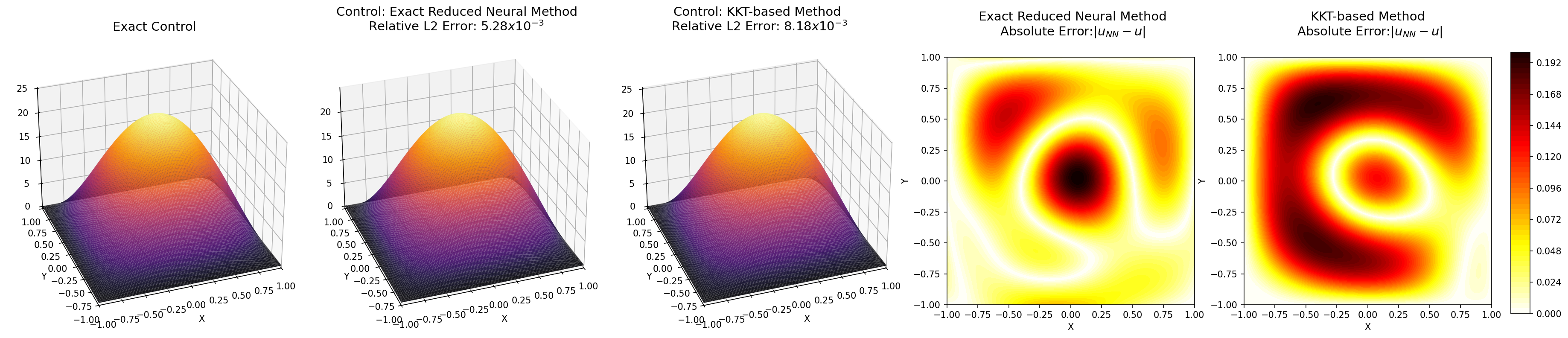}
        \label{fig:PDE_control_figure_highd}
    \end{subfigure}
    \vspace{-0.4cm}
    \caption{Numerical results for Example \ref{subsec:4D}. Numerical optimal states and controls by the exact reduced neural method and the KKT-based neural method at $\{(x_1,x_2,0,...,0)\}$.} 
    \label{fig:PDE_figure_4d}
\end{figure}

\subsection{A state-constrained optimal control problem with low multiplier regularity}
\label{subsec:lowregu}

We consider the following optimal control problem with pointwise state constraints\cite{hintermuller2006feasible}:
\begin{align*}
&\text{minimize}\quad J(y,u)
=
\frac12\|y-y_d\|_{L^2(\Omega)}^2
+
\frac{\alpha}{2}\|u\|_{L^2(\Omega)}^2, 
\qquad \text{over }
(y,u)\in H_0^1(\Omega)\times L^2(\Omega),\\
%\end{equation}
&\text{subject to}\quad
%\begin{equation}
-\Delta y=u \ \text{in }\Omega,
\qquad
y=0 \ \text{on }\partial\Omega,
\qquad
y\le\psi \ \text{a.e. in }\Omega .
\end{align*}
We take $\Omega=(0,1)^2$, $y_d=10(\sin(2\pi x_1)+x_2)$, $\psi=0.01$, and $\alpha=0.1$. This example is challenging because the multiplier associated with the state constraint has low regularity\cite{hintermuller2006feasible}, which makes KKT-based neural approaches challenging to apply. Instead, we employ the exact reduced neural method together with the CNP scheme for the upper-bound constraint and the boundary condition. Introducing a neural network $\mathcal N(\cdot,\theta)$ and the boundary lifting function $w$, we employ the transformation proposed in Section~\ref{sec:bound_const}:
\begin{equation}
    y(\cdot,\theta)
=
\psi-(\mathcal N(\cdot,\theta)w+\sqrt{\psi})^2
=
-\mathcal N(\cdot,\theta)^2 w^2
-
2\sqrt{\psi}\,\mathcal N(\cdot,\theta)w .
\end{equation}
The loss function then reads
\begin{equation}
\label{equa:CNP_low_r}
\text{Loss}(\theta)
=
J\big(y(\cdot,\theta),-\Delta y(\cdot,\theta)\big).
\end{equation}
This formulation avoids solving the KKT system and therefore circumvents the difficulties associated with the low-regularity multiplier. Moreover, no penalty term is introduced.

The networks are trained for 20000 epochs with an initial learning rate of $10^{-3}$, which is reduced by a factor of $0.8$ every 500 epochs. At each epoch the loss function is evaluated on a uniform grid of $(50+N)^2$ points, where $N$ is drawn uniformly from $\{0,\ldots,50\}$.

Since the exact solution is not available in closed form, we compare our results with those obtained by the primal-dual active set method proposed in \cite{hintermuller2006path}. This method solves a regularized version of the problem and converges locally with a q-superlinear rate. Fig.~\ref{fig:state_bound} shows the numerical optimal states obtained by the primal-dual active set method and by the proposed neural approach. We observe that their numerical solutions are highly consistent. In particular, using the primal-dual active set solution as reference, the
relative $L^2$ errors of the exact reduced neural method are
$6.30\times 10^{-4}$ for the state and $1.02\times 10^{-2}$ for the control.

We also attempted a KKT-based neural approach in which $y$, $u$, the adjoint variable, and the multiplier are approximated by four neural networks, and the KKT system is solved using the PINN method with mean square loss. However, due to the low regularity of the multiplier, the KKT-based approach does not yield satisfactory approximations in our experiments, and its numerical results are therefore omitted.
\begin{figure}[htbp]
     \centering
     \begin{subfigure}[b]{0.4\textwidth}
        \centering
        \includegraphics[width=\textwidth]{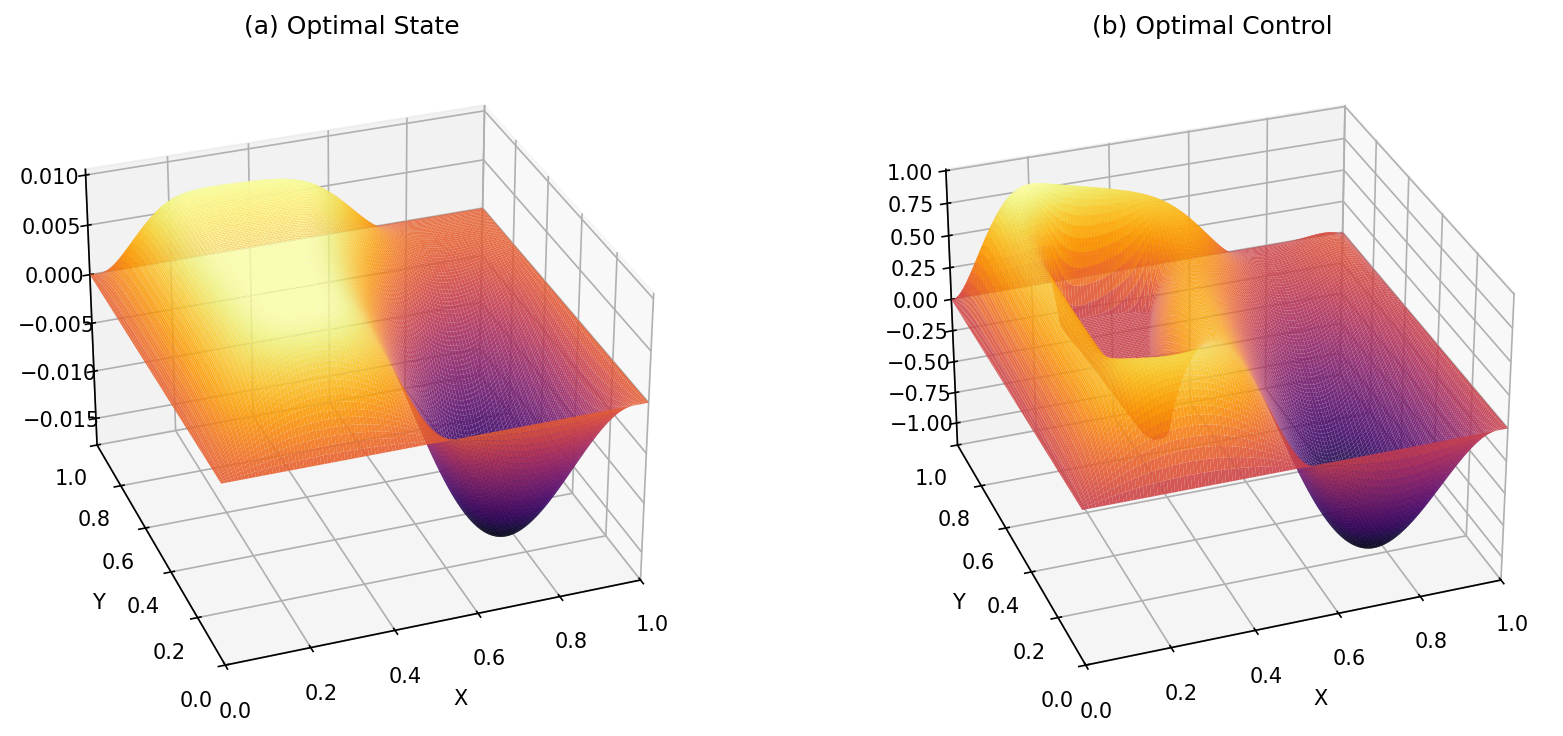}
        \caption{Primal-dual active set method}
    \end{subfigure}
       \hspace{1cm}
    \begin{subfigure}[b]{0.4\textwidth}
        \centering
        \includegraphics[width=\textwidth]{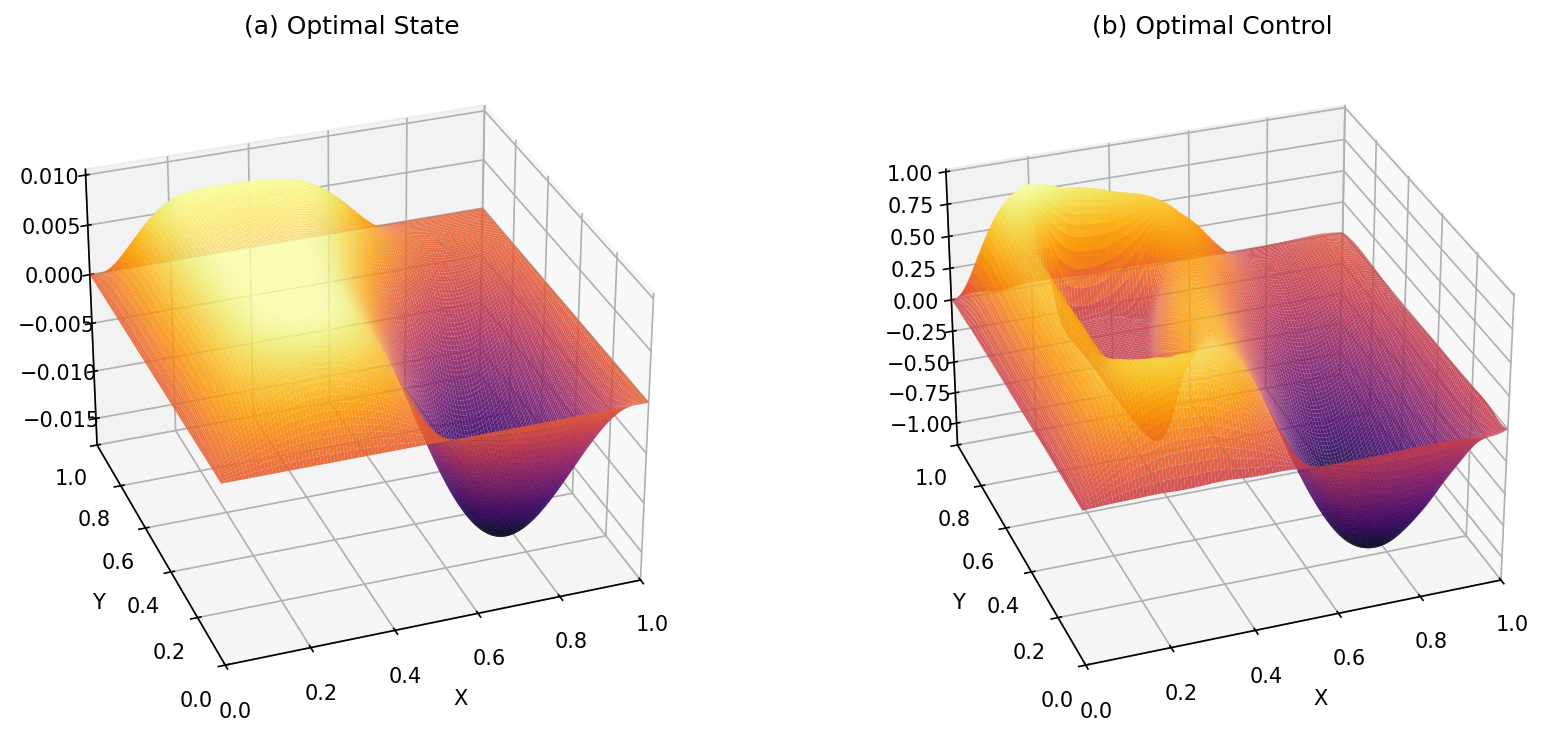}
        \caption{Exact reduced neural method}
    \end{subfigure}
     \caption{Numerical results for Example \ref{subsec:lowregu}. Numerical optimal states and controls computed by the primal-dual active set method and the exact reduced neural method.}
    \label{fig:state_bound}
\end{figure}

\subsection{A mathematical program without strict complementarity}
\label{subsec:mpec}
In this example we consider a mathematical program with complementarity constraints in function space \cite{hintermuller2009mathematical}:
\begin{align}
\nonumber
&\text{minimize}\quad J(y,\xi,u)
=
\frac12\|y-y_d\|_{L^2(\Omega)}^2
+
\frac{\alpha}{2}\|u\|_{L^2(\Omega)}^2,
\qquad
(y,u,\xi)\in H_0^1(\Omega)\times L^2(\Omega)^2,\\
%\end{equation}
&\text{subject to}\quad
%\begin{equation}
\label{equa:const_mp}
\begin{cases}
-\Delta y = u+\xi+f & \text{in }\Omega,\\
y\ge0,\;\xi\ge0 & \text{a.e. in }\Omega,\\
\langle y,\xi\rangle_{L^2(\Omega)}=0.
\end{cases}
\end{align}
We set the exact optimal solutions as
\begin{equation}
   u^*(x_1,x_2)=100 y^*(x_1,x_2),\quad y^*(x_1,x_2)=\begin{cases}
   z_1(x_1)z_2(x_2)\quad \text{in }  (0,0.5)\times(0,0.8),\\
        0\quad \qquad\qquad  \text{else},
    \end{cases}
\end{equation}
\begin{equation}
    \xi^*(x_1,x_2)=50\max(-|x_1-0.8|-|(x_2-0.2)x_1-0.3|+0.35,0).
\end{equation}
Here, we have
\[
z_1(x_1)
=
-4096x_1^6+6144x_1^5-3072x_1^4+512x_1^3,
\quad
z_2(x_2)
=
-244.140625x_2^6+585.9375x_2^5-468.75x_2^4+125x_2^3 .
\]
To satisfy the optimality system, we set $f:=-\Delta y^*-u^*-\xi^*$, and $y_d:=y^*+\xi^*-\alpha\Delta u^*,$ with $\alpha=0.05$. Note that the biactive set $\{(x_1,x_2)\mid y(x_1,x_2)=\xi(x_1,x_2)=0\}$ has positive measure, so strict complementarity does not hold. This situation is particularly challenging because the gradients of the active constraints are linearly dependent at the solution, which prevents the application of the KKT-theory. To implement neural network methods for this problem we introduce two networks $\mathcal N(\cdot,\theta_y)$ and $\mathcal N(\cdot,\theta_\xi)$ and use the CNPs
\begin{equation}
y(\cdot,\theta_y)=\mathcal N(\cdot,\theta_y)^2\,w(x),
\qquad
\xi(\cdot,\theta_\xi)=\mathcal N(\cdot,\theta_\xi)^2,    
\end{equation}
which automatically enforce the nonnegativity constraints and the boundary condition. We employ the exact reduced neural method to satisfy the PDE constraint exactly while enforcing the complementarity condition $\langle y,\xi\rangle_{L^2(\Omega)}=0$ through a penalty term. The resulting loss function then reads
\begin{equation}
\text{Loss}(\theta_y,\theta_\xi)
=
\frac12\|y(\cdot,\theta_y)-y_d\|_{L^2(\Omega)}^2
+
\frac{\alpha}{2}
\|-\Delta y(\cdot,\theta_y)-\xi(\cdot,\theta_\xi)-f\|_{L^2(\Omega)}^2
+
\beta\langle y(\cdot,\theta_y),\xi(\cdot,\theta_\xi)\rangle_{L^2(\Omega)},
\end{equation}
with a penalty parameter $\beta$. Since the parameterizations of $y$ and $\xi$ ensure that the inner product in the penalty term is automatically nonnegative, no additional absolute value or squaring operation is required. In principle one could design a CNP scheme to enforce the orthogonality condition directly, but simultaneously satisfying both nonnegativity and orthogonality constraints in a hard manner is considerably more complicated. By contrast, a pure penalty formulation would require four separate penalty terms to enforce the constraints in \eqref{equa:const_mp}.

We choose $\beta=10^2$, and the networks are trained for 20000 epochs with an initial learning rate of $10^{-3}$, which is reduced by a factor of $0.8$ every 500 epochs. At each epoch, the loss function is evaluated on a uniform grid of $(50+N)^2$ points, where $N$ is drawn uniformly from $\{0,\ldots,50\}$. The exact and numerical optimal states, multipliers, and controls are shown in Fig.~\ref{fig:MPEC}. Even in the absence of strict complementarity, the proposed method produces accurate approximations of the optimal state, multiplier, and control, with relative $L^2$ errors of $1.00\times 10^{-3}, 2.07\times 10^{-2}$, and $1.05\times 10^{-2}$, respectively.
\begin{figure}
    \centering
    \includegraphics[width=0.6\linewidth]{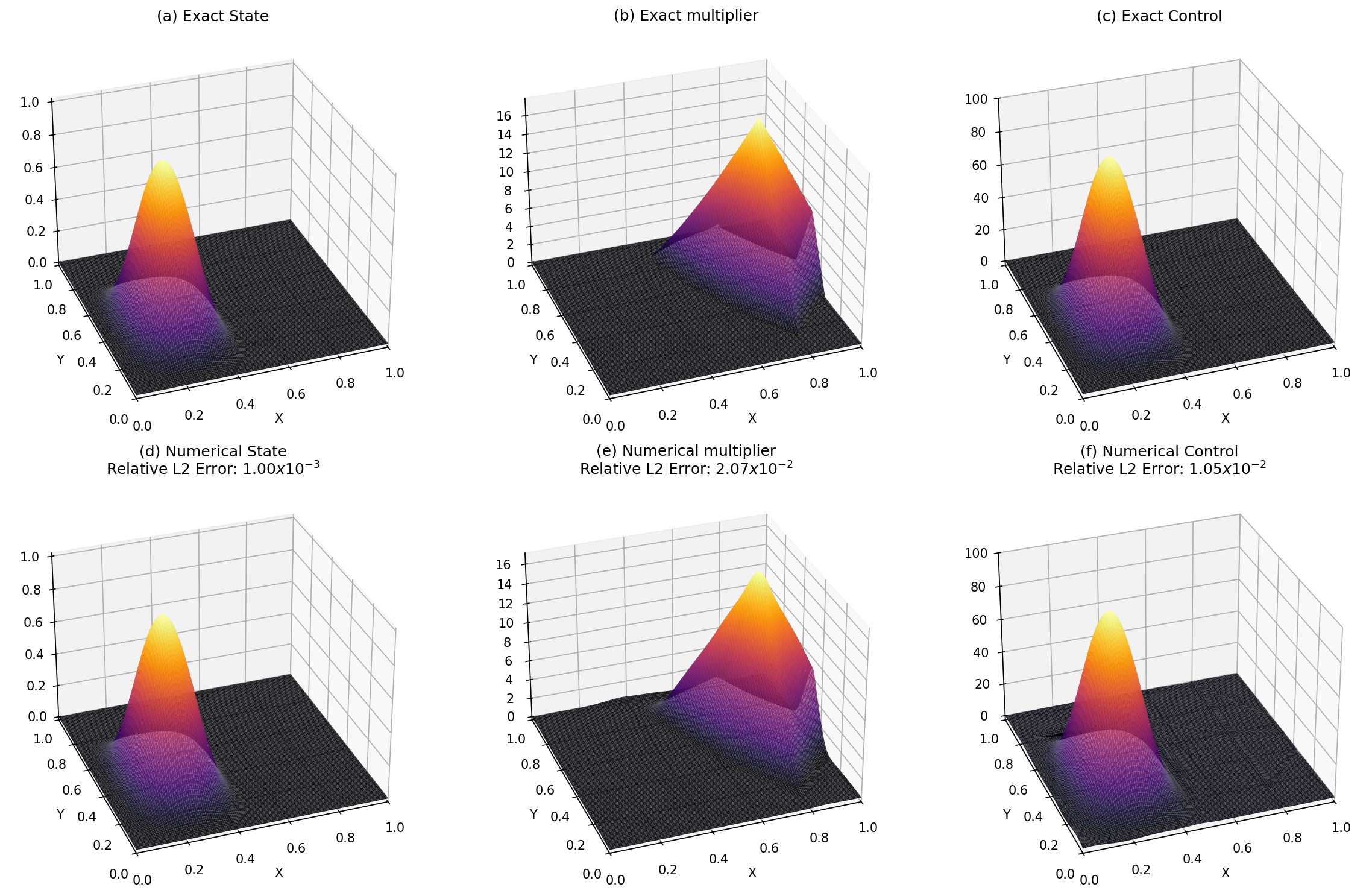}
    \caption{Numerical results for Example \ref{subsec:mpec}. The first row shows the exact optimal state, multiplier, and control; the second row shows the numerical optimal state, multiplier and control.}
    \label{fig:MPEC}
\end{figure}

\subsection{Navier-Stokes optimal control with a high-frequency adjoint}
\label{subsec:navier}

In Section~\ref{subsec:lowregu} we demonstrated the advantage of the exact reduced neural method for problems with low-regularity multipliers. Here we consider another challenging scenario where the adjoint function is dominated by high-frequency modes, which are known to be difficult for neural networks to approximate accurately~\cite{xu2025overview}. Our numerical results indicate that, in this case, the exact reduced neural method is more effective than the corresponding KKT-based neural approach.

We consider the optimal control problem with $\Omega=(0,1)^2$ and $\alpha=0.1$:
\begin{equation}
\text{minimize}\quad J(y,u)=\frac12\|y-y_d\|^2_{L^2(\Omega)}+\frac{\alpha}{2}\|u\|^2_{L^2(\Omega)},
\quad
\text{over }(y,u)\in H^1_0(\Omega)\times L^2(\Omega),
\end{equation}
subject to the steady-state incompressible Navier--Stokes equations
\begin{equation}
    \begin{cases}
    \begin{aligned}
        -\mu \Delta y+(y\cdot\nabla)y+\nabla p &= u +f\quad &\text{in } \Omega,\\
        \text{div } y&=0\quad &\text{in }\Omega,\\
        y&=0\quad &\text{on }\partial\Omega,
        \end{aligned}
    \end{cases}
\end{equation}
where $\mu=0.5$ denotes the reciprocal Reynolds number. We now construct a constrained neural parameterization for the velocity field to enforce both the divergence-free condition and the homogeneous boundary condition. For any divergence-free vector field $y=(y_1,y_2)^T$, there exists a stream function $\psi$ such that
\begin{equation}
y_1=\frac{\partial\psi}{\partial x_2},
\qquad
y_2=-\frac{\partial\psi}{\partial x_1}.
\end{equation}
Since $y=0$ on $\partial\Omega$ and $\Omega=(0,1)^2$, the stream function must be constant on the boundary. Without loss of generality we set $\psi=0$ on $\partial\Omega$. We now let $w(x)=\sin(\pi x_1)\sin(\pi x_2),$ and represent the stream function as $\psi = \phi w$ for some latent function $\phi$. Then we have 
\begin{equation}
    \frac{\partial \psi}{\partial x_2}=\frac{\partial\phi}{\partial x_2}w + \frac{\partial w}{\partial x_2}\phi=0,\quad \frac{\partial \psi}{\partial x_1}=\frac{\partial\phi}{\partial x_1}w + \frac{\partial w}{\partial x_1}\phi=0,\quad \text{on }\partial\Omega.
\end{equation}
Since $w=0$ and $|\frac{\partial w}{\partial x_1}| + |\frac{\partial w}{\partial x_2}|\ne 0$ on $\partial \Omega$, we have $\phi=0$ on $\partial\Omega$. We therefore write $\phi = hw$ for a latent function $h$, which yields $\psi = h w^2$.
Then we introduce a network $\mathcal{N}(\cdot,\theta_y)$ and propose a CNP for $y$:
\begin{equation}
     y_1(\cdot,\theta_y)=\frac{\partial\psi(\cdot,\theta_y)}{\partial x_2},\quad y_2(\cdot,\theta_y)=-\frac{\partial\psi(\cdot,\theta_y)}{\partial x_1}, \quad \text{where }\psi(\cdot,\theta_y) = \mathcal{N}(\cdot,\theta_y)w^2.
\end{equation}
This construction automatically enforces both $\operatorname{div}y=0$ and $y=0$ on $\partial\Omega$. We further introduce a neural network for the pressure, $p(\cdot,\theta_p)=\mathcal N(\cdot,\theta_p)$. The exact reduced neural method then minimizes
\begin{equation}
\text{Loss}(\theta_y,\theta_p)
=
J\big(y(\cdot,\theta_y),u(\cdot,\theta_y,\theta_p)\big),
\end{equation}
where the control is obtained directly from the PDE relation
\begin{equation}
u(\cdot,\theta_y,\theta_p)
=
-\mu \Delta y(\cdot,\theta_y)
+
(y(\cdot,\theta_y)\cdot\nabla)y(\cdot,\theta_y)
+
\nabla p(\cdot,\theta_p)
-
f .
\end{equation}
For comparison we also consider a neural solver based on the KKT system. The adjoint equation reads
\begin{equation}
\begin{cases}
\begin{aligned}
     -\mu \Delta\lambda - (y\cdot\nabla)\lambda+(\nabla y )^\top\lambda + \nabla\nu &=y-y_d\quad &\text{in }\Omega,\\
    \text{div }\lambda&=0 \quad &\text{in } \Omega,\\
    \lambda &= 0,\quad &\text{on }\partial\Omega.
\end{aligned}
\end{cases}
\end{equation}
We then introduce another network $\mathcal{N}(\cdot,\theta_\lambda)$ and employ the following CNP for $\lambda$:
\begin{equation}
\lambda_1(\cdot,\theta_\lambda)=\frac{\partial\varphi(\cdot,\theta_\lambda)}{\partial x_2},\quad \lambda_2(\cdot,\theta_\lambda)=-\frac{\partial\varphi(\cdot,\theta_\lambda)}{\partial x_1},\quad \text{where }\varphi(\cdot,\theta_\lambda) = \mathcal{N}(\cdot,\theta_\lambda)w^2,
\end{equation}
as well as a network $\nu(\cdot,\theta_\nu)=\mathcal{N}(\cdot,\theta_\nu)$ for $\nu$. 
Let $\Theta=\{\theta_y,\theta_p,\theta_\lambda,\theta_\nu\}$, and use the optimality condition $u=-\alpha^{-1}\lambda$ to reduce the control. The loss function based on the KKT system becomes
\begin{equation*}
\begin{aligned}
\text{Loss}(\Theta)
=&\;
\|-\mu \Delta y(\cdot,\theta_y)
+
(y(\cdot,\theta_y)\cdot\nabla)y(\cdot,\theta_y)
+
\nabla p(\cdot,\theta_p)
+
\alpha^{-1}\lambda(\cdot,\theta_\lambda)
-
f
\|^2_{L^2(\Omega)}
\\
&+
\|-\mu \Delta\lambda(\cdot,\theta_\lambda)
-
(y(\cdot,\theta_y)\cdot\nabla)\lambda(\cdot,\theta_\lambda)
+
(\nabla y(\cdot,\theta_y))^T\lambda(\cdot,\theta_\lambda)
+
\nabla\nu(\cdot,\theta_\nu)
-
y(\cdot,\theta_y)
+
y_d
\|^2_{L^2(\Omega)} .
\end{aligned}
\end{equation*}
The exact solutions are chosen as
\begin{equation*}
y^*
=
e^{-0.05\mu}
\begin{pmatrix}
\sin^2(\pi x_1)\sin(\pi x_2)\cos(\pi x_2)\\
-\sin^2(\pi x_2)\sin(\pi x_1)\cos(\pi x_1)
\end{pmatrix},\quad\lambda^*
=
\left(e^{-0.05\mu}-e^{-\mu}\right)
\begin{pmatrix}
\sin^2(8\pi x_1)\sin(8\pi x_2)\cos(8\pi x_2)\\
-\sin^2(8\pi x_2)\sin(8\pi x_1)\cos(8\pi x_1)
\end{pmatrix},
\end{equation*}
and $u^*=-\alpha^{-1}\lambda^*$. The pressures $p^*$ and $\nu^*$ are set to zero. The functions $f$ and $y_d$ are then determined from the optimality system.
% \begin{equation}
% \begin{aligned}
% f&=-\mu \Delta y^*+(y^*\cdot\nabla)y^*+\nabla p^*-u^*,\\
% y_d&=y^*+\mu \Delta\lambda^*+(y^*\cdot\nabla)\lambda^*-(\nabla y^*)^T\lambda^*-\nabla\nu^* .
% \end{aligned}
% \end{equation}

Both methods are trained for 20000 epochs. The learning rate is initialized at $10^{-3}$ and reduced by a factor of $0.8$ every 500 epochs. At each epoch the loss functions are evaluated on a uniform grid of $(50+N)^2$ points, where $N$ is drawn uniformly from $\{0,\ldots,50\}$. Fig.~\ref{fig:NS} shows the exact control, the numerical controls obtained by both methods, and the corresponding relative $L^2$ error histories. Both methods produce accurate approximations; however, the exact reduced neural method achieves higher accuracy with a final relative error of $6.18\times10^{-3}$, which is less than half of the error obtained by the KKT-based method. Moreover, the exact reduced neural method converges significantly faster: after approximately 2500 epochs the error is already reduced to about $1.5\times10^{-2}$. In addition, the KKT-based approach requires more training time and memory per epoch due to the additional networks and derivatives involved in the adjoint equation.
\begin{figure}[htbp]
     \centering
    \begin{subfigure}[b]{0.6\textwidth}
        \centering
        \includegraphics[width=\textwidth]{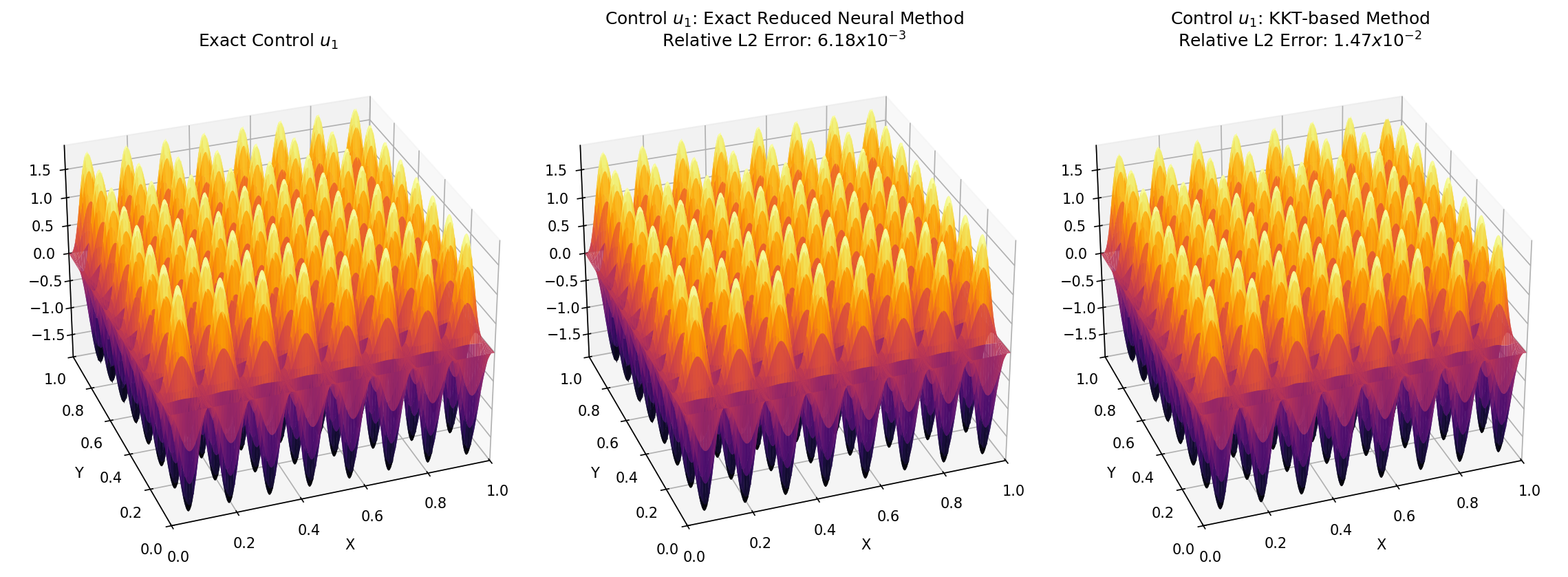}
        \caption{Exact and numerical controls for $u_1$.}
    \end{subfigure}
   % \hfill 
      \hspace{0.5cm}
    \begin{subfigure}[b]{0.25\textwidth}
        \centering
        \includegraphics[width=\textwidth]{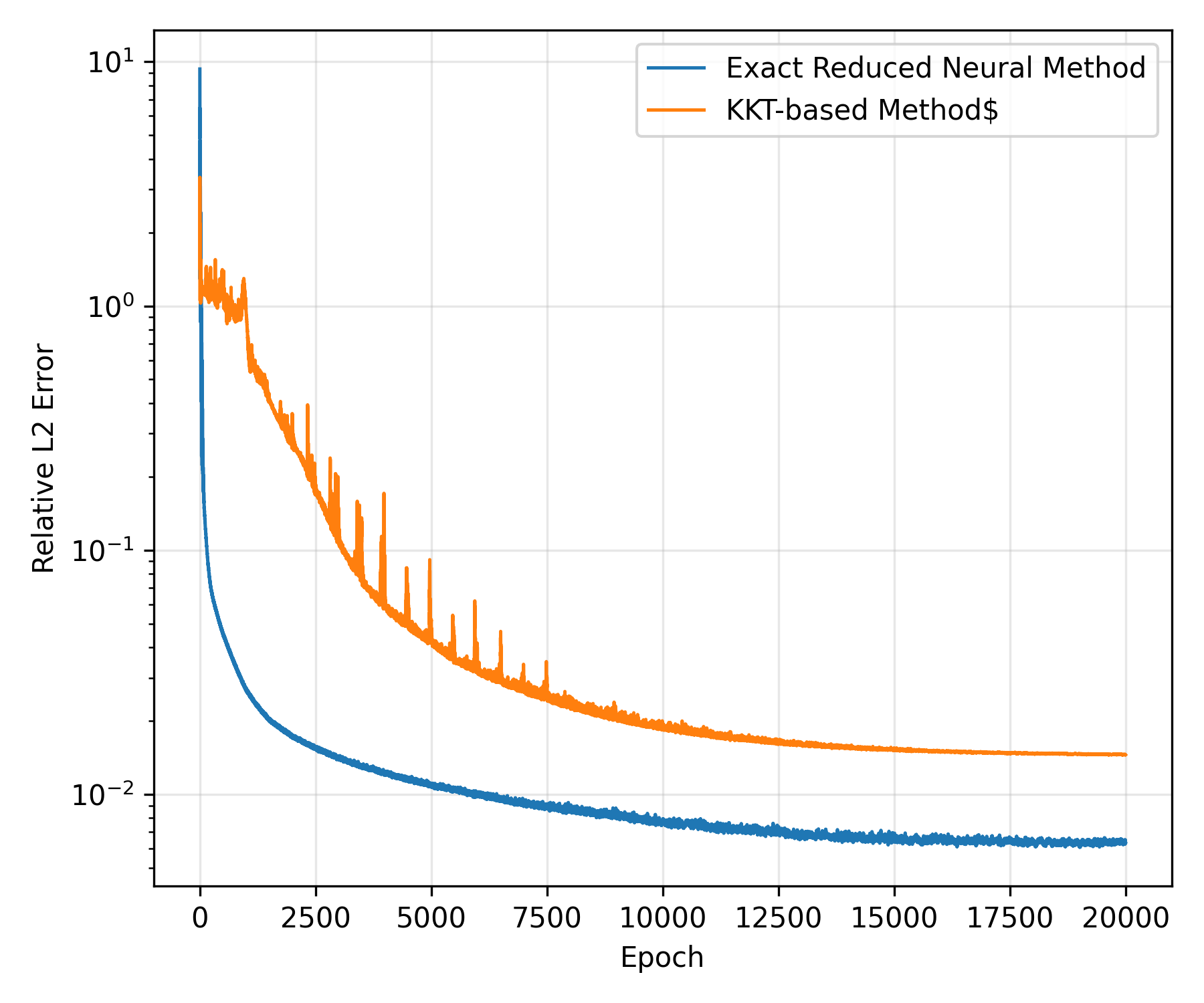}
        \caption{Error histories.}
    \end{subfigure}
     \caption{Numerical results for Example \ref{subsec:navier}. Exact control and numerical controls computed by the exact reduced neural method and the KKT-based neural method, as well as their error histories during training.}
    \label{fig:NS}
\end{figure}

\subsection{Optimal control problem with low-regularity state in an L-shaped domain}
We consider the optimal control problem
\begin{align*}
    &\text{minimize}\quad J(y,u)=\|y-y_d\|_{L^2(\Omega)}^2 + \frac{\alpha}{2}\|u\| _{L^2(\Omega)}^2\qquad \text{over }(y,u)\in H^1_0(\Omega)\times L^2(\Omega),\\
&\text{subject to}\quad
%\begin{equation}
-\Delta y = u+f  \text{ in } \Omega, 
\qquad y = 0  \text{ on } \partial \Omega.
\end{align*}
We choose $\alpha=0.5$ and $\Omega$ to be the L-shaped domain $(-1,1)^2\setminus ([0,1)\times(-1,0])$. The singular function at the corner $(0,0)$ is $s=r^{\frac{2}{3}}\sin(\frac{2\theta}{3})$. We set $\rho=1$ and $L=1/2$ in \eqref{equa:rho}. The optimal state and control are given as $y^* = \mathcal{R}+\eta_\rho s$ and $u^*=50y^*$, with $\mathcal{R}$ given as
\begin{equation}
    \mathcal{R} = \begin{cases}
    \begin{aligned}
         &\sin(2\pi x_1)(\frac{1}{2}x_2^2+x_2)(x_2^2-1),\quad &-1\le x_2\le 0,\\
        &\sin(2\pi x_1)(-\frac{1}{2}x_2^2+x_2)(x_2^2-1),\quad &0< x_2\le 1.
    \end{aligned}
    \end{cases}
\end{equation}
Based on the optimality system, the function $f$ and $y_d$ are determined as
%\begin{equation}
 $f=-\Delta y ^*-u^*,\quad y_d = -\alpha \Delta u ^* + y^*$.
%\end{equation}
A smooth boundary-lifting function for this domain is not directly available. Instead, we introduce a network $\mathcal{N}(\cdot,\theta_w)$, and let $w_{\theta_w}=\mathcal{N}(\cdot,\theta_w)(1-x_1^2)(1-x_2^2)$. Then we learn a smooth boundary-lifting function by minimizing the following loss function:
\begin{equation}
    \text{Loss}(\theta_w)=\|w_{\theta_w}-\operatorname{dist}(\cdot,\partial\Omega)\|_{L^2(\Omega)}^2 + \beta\|w_{\theta_w}\|_{L^2(\Gamma)}^2,
\end{equation}
where 
\begin{equation}
   \operatorname{dist}(x,\partial\Omega):=\min_{x_b\in \partial\Omega}\|x-x_b\|_2 ,\qquad \Gamma = \big(\{0\}\times (-1,0)\big) \cup \big((0,1)\times \{0\}\big).
\end{equation}
The smooth property of $w_{\theta_w}$ can be guaranteed by choosing a smooth activation function, and the boundary condition of $w_{\theta_w}$ on $\partial\Omega\setminus\Gamma$ is satisfied exactly by construction. The first term in the loss function enforces the positivity of $w_{\theta_w}$ in $\Omega$, while the second term enforces the boundary condition on $\Gamma$. Once the training is finished, we fix $\theta_w$ and employ $w_{\theta_w}$ as the smooth boundary-lifting
function.

To implement the singularity-enriched exact reduced neural method, we introduce a network $\mathcal{N}(\cdot,\theta_R)$ and a trainable parameter $\gamma$, then approximate the state and control as 
\begin{equation}
y(\cdot,\hat{\theta})=\mathcal{N}(\cdot,\theta_R)w_{\theta_w} + \gamma(\eta_\rho s),\quad u(\cdot,\hat{\theta})=-\Delta (\mathcal{N}(\cdot,\theta_R)w_{\theta_w})-\gamma\Delta (\eta_\rho s)-f,\quad \hat{\theta}=\{\theta_R,\gamma\}.
\end{equation}
The loss function then reads
\begin{equation}\label{equa:loss_SEERNM}
\text{Loss}(\hat{\theta})=J(y(\cdot,\hat{\theta}), u(\cdot,\hat{\theta})).
\end{equation}
As a comparison, we directly apply the exact reduced neural method without singularity decomposition, i.e., introducing a network $\mathcal{N}(\cdot,\theta_y)$  and approximating the state and control as 
\begin{equation}
y(\cdot,\theta_y)=\mathcal{N}(\cdot,\theta_y)w_{\theta_w},\quad u(\cdot,\theta_y)=-\Delta (\mathcal{N}(\cdot,\theta_y)w_{\theta_w})-f.
\end{equation}
The loss function then reads
\begin{equation}\label{equa:loss_ERNM}
\text{Loss}(\theta_y)=J(y(\cdot,\theta_y), u(\cdot,\theta_y)\big).
\end{equation}
The boundary-lifting function is trained for 10000 epochs with $\beta=10^3$. Both methods are trained for 20000 epochs to minimize the loss functions \eqref{equa:loss_SEERNM} and \eqref{equa:loss_ERNM}, respectively. The learning rates are initialized to $10^{-3}$ for the network parameters and $10^{-2}$ for $\gamma$, and are reduced by a factor of $0.8$ every 500 epochs. The numerical states and controls are presented in Fig.~\ref{fig:singular}. We observe that the exact reduced neural method without singularity decomposition exhibits large errors near the singular corner for both state and control, with relative $L^2$ errors of $2.81\times~10^{-2}$ and $4.32\times 10^{-2}$, respectively. Owing to the singularity decomposition, the singularity-enriched exact reduced neural method can significantly improve the accuracy, with relative $L^2$ errors of $4.02\times 10^{-3}$ and $1.06\times 10^{-2}$ for the state and control, respectively. The learned $\gamma$ is $0.9997$. 
\begin{figure}
    \centering
\includegraphics[width=0.85\linewidth]{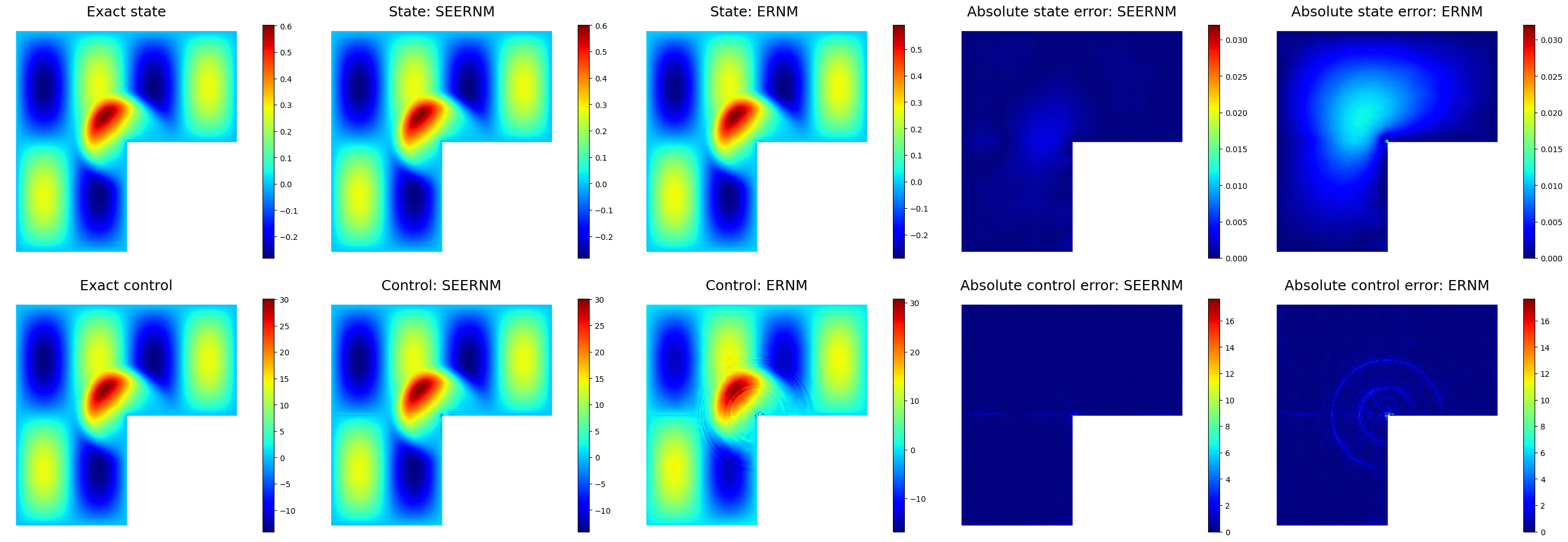}
    \caption{Numerical states and controls and their absolute errors obtained by the singularity-enriched exact reduced neural method (SEERNM) and the exact reduced neural method without singularity decomposition (ERNM).}
    \label{fig:singular}
\end{figure}

\subsection{A hybrid inverse problem}
\label{subsec:IP}

In this example we consider a hybrid inverse problem governed by the second-order elliptic equation
\begin{equation}
\begin{cases}
-\nabla\cdot(\gamma\nabla u)+\sigma(x)u(x)=f,
& \text{in }\Omega,\\
u=h,
& \text{on }\partial\Omega,
\end{cases}
\end{equation}
where $\Omega=(-1,1)^2$, $\gamma\equiv0.1$, and the Dirichlet data $h$ are assumed to be known exactly. The goal is to simultaneously recover the absorption coefficient $\sigma$ and the source term $f$ from the internal measurement $H=\sigma u|_{\Omega}$ together with the Neumann boundary data $g=\partial u/\partial n|_{\partial\Omega}$. This inverse problem is motivated by applications in biomedical imaging, such as photoacoustic imaging, as well as in industrial nondestructive testing. In these settings, the boundary data $g$ correspond to surface flux measurements, while the internal data $H$, though idealized, are commonly used to model hybrid or multi-modal imaging scenarios in which quantities involving both the material property and the field variable are accessible.

For the numerical experiment we use the noisy measurements
\[
g_i^\delta
=
\frac{\partial u(x_i^b)}{\partial n}(1+\delta_i^b),
\qquad
H_i^\delta
=
\sigma(x_i)u(x_i)(1+\delta_i),
\]
where the boundary points $\{x_i^b\}$ are equispaced along $\partial\Omega$ with $128$ points on each side, while the interior  points $\{x_i\}\subset\Omega$ form a $128\times128$ uniform grid in $\Omega$. The noise variables $\{\delta_i\}$ and $\{\delta_i^b\}$ are independent samples from $N(0,0.005^2)$. We also impose a prior information $1\le\sigma\le2$ in $\Omega$ and $\sigma=1$ on $\partial\Omega$. The Dirichlet boundary data are set to $h\equiv1$. The exact solutions used to generate the synthetic data are
\begin{equation}
    \begin{cases}
        u^*(x)=e^{\frac{x_1^2+x_2^2}{\pi}} \sin(\pi x_1)\sin(\pi x_2)+1,\\
        \sigma^*(x)=\max\!\left(\min\!\left(0.8 + 2\sin(\pi x_1)\sin(\pi x_2),\,2\right),\,1\right),\\
        f^*(x)=-\gamma \Delta u^*(x)+\sigma^*(x) u^*(x).
    \end{cases}
\end{equation}
To solve the inverse problem we employ the exact reduced neural method. Two neural networks, $\mathcal N(\cdot;\theta_u)$ and $\mathcal N(\cdot;\theta_\sigma)$, are introduced. The following CNPs are used to enforce the boundary condition for $u$ and to encode the prior bounds on $\sigma$, respectively.
\begin{equation}
    u(\cdot,\theta_u)=\mathcal N(\cdot,\theta_u)w+1,
\qquad
\sigma(\cdot,\theta_\sigma)
=
1+\sin^2\!\big(\mathcal N(\cdot,\theta_\sigma)w\big),
\end{equation}
where \(w(x)=(1-x_1^2)(1-x_2^2)\). The networks are trained by minimizing
\begin{equation}
\text{Loss}(\theta_u,\theta_\sigma)
=
\frac1M\sum_{i=1}^M
\left|
\frac{\partial u(x_i^b,\theta_u)}{\partial n}-g_i^\delta
\right|^2
+
\frac1N\sum_{i=1}^N
\left|
\sigma(x_i,\theta_\sigma)u(x_i,\theta_u)-H_i^\delta
\right|^2 .
\end{equation}
We note that no explicit regularization term is included in the loss function, relying instead on the inherent regularization property of neural networks, often referred to as the low-frequency principle \cite{xu2025overview}. Moreover, second-order derivatives of $u(\cdot,\theta_u)$ are not required during the training process; they are only evaluated after training to compute the recovered source term
\[
f_{\theta} := -\gamma \Delta u(\cdot,\theta_u) + \sigma(\cdot,\theta_\sigma)u(\cdot,\theta_u).
\]
The networks are trained for 20000 epochs. The learning rate is initialized at $10^{-3}$ and reduced by a factor of $0.8$ every 500 epochs. The reconstructions of $\sigma(\cdot,\theta_\sigma)$ and $f_{\theta}$ are presented in Fig.~\ref{fig:hybridIP}. The results show that both $\sigma$ and $f$ are recovered accurately, with relative $L^2$ errors of $1.01\times 10^{-2}$ and $2.10\times10^{-2}$, respectively.
\begin{figure}
    \centering
    \includegraphics[width=0.85\linewidth]{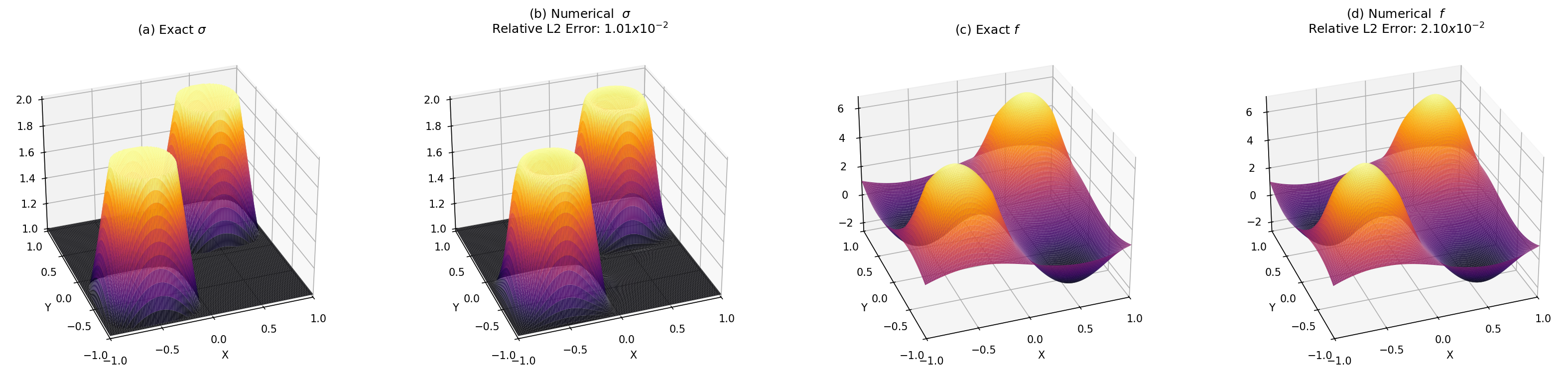}
    \caption{Numerical results for Example \ref{subsec:IP}. Exact and reconstructed absorption coefficient and source term.}
    \label{fig:hybridIP}
\end{figure}

\section{Conclusions}
\label{sec:conclusion}
In this paper we have introduced several constrained neural parameterization (CNP) schemes for optimization problems in function spaces. The central idea is to construct smooth neural parameterizations whose image lies entirely inside the admissible set while remaining dense in that set. This transforms the original constrained optimization problem into an unconstrained optimization problem in parameter space, thereby eliminating the need for penalty parameters, projection steps, or multiplier updates during training.

We established a geometric decomposition framework for polyhedral constraint sets in Hilbert spaces based on a Minkowski--Weyl type representation. These results enable exact neural parameterizations of polyhedral constraints, covering both cases where the constraint functions are not directly available and those where additional Dirichlet boundary conditions are imposed. We further developed smooth neural constructions for several classes of commonly encountered constraints with and without Dirichlet boundary conditions. 
For PDE-constrained optimization problems, we proposed an exact reduced neural formulation for systems admitting separable structures, which enforces the PDE constraint exactly while avoiding the explicit approximation of adjoint variables and Lagrange multipliers. Numerical experiments on several challenging examples were presented to demonstrate the effectiveness of the proposed methodology. Across these examples, the CNP framework consistently produced accurate solutions while maintaining exact feasibility and stable training behavior, often outperforming penalty-based neural methods and KKT-based neural solvers.

Several directions for future research remain open. 
First, a rigorous convergence analysis of the proposed parameterizations in the context of gradient-based optimization would provide a deeper theoretical understanding and guide the design of more efficient architectures. Second, the exact reduced neural formulation relies on the existence of a separable structure of the PDE system; extending this idea to variational formulations or to problems where the control appears nonlinearly remains an open challenge. Third, extending the framework to more general constraint structures would broaden its applicability. In particular, an interesting direction is the development of CNP schemes for mixed constraint sets that combine different types of constraints, such as polyhedral constraints together with pointwise inequality constraints. Finally, another promising direction is the integration of CNP schemes with operator-learning architectures and neural surrogate models for large-scale optimization and control problems.\\[1ex]

\noindent {\bf Acknowledgement:} MH acknowledges support by the Deutsche Forschungsgemeinschaft (DFG, German Research Foundation) under Germany’s Excellence Strategy – The Berlin Mathematics Research Center
MATH+ (EXC-2046/1, project ID: 390685689).

\bibliography{references}

\end{document}